\newtheorem{theorem}{Theorem}[section]
\newtheorem{lemma}[theorem]{Lemma}
\theoremstyle{definition}
\theoremstyle{remark}
\numberwithin{equation}{section}
\def\bfa{{\mathbf a}}
\def\bfb{{\mathbf b}}
 \def\bfe{{\mathbf e}}
\def\bfF{{\mathbf F}}
\def\bfj{{\mathbf j}}
\def\bfk{{\mathbf k}}
\def\bfr{{\mathbf r}}
\def\bfu{{\mathbf u}}
\def\bfv{{\mathbf v}}
\def\bfw{{\mathbf w}}
\def\bfx{{\mathbf x}}
\def\bfy{{\mathbf y}}
\def\bfz{{\mathbf z}}
\def\dtil{{\tilde{d}}}
\def\bfvtil{{\widetilde{\bfv}}}
\def\bfwtil{{\widetilde{\bfw}}}
\def\bfxtil{{\widetilde{\bfx}}}
\def\bfytil{{\widetilde{\bfy}}}
\def\bfxhat{{\widehat{\bfx}}}
\def\bfyhat{{\widehat{\bfy}}}
\def\calA{{\mathcal A}}  
\def\calB{{\mathcal B}}
\def\calD{{\mathcal D}}
\def\calF{{\mathcal F}}
\def\calL{{\mathcal L}}
\def\calU{{\mathcal U}}
\def\calV{{\mathcal V}}
\def\A{{\mathbb A}}
\def\C{{\mathbb C}}\def\P{{\mathbb P}}
\def\R{{\mathbb R}}
\def\Z{{\mathbb Z}}\def\Q{{\mathbb Q}}
\def\grM{{\mathfrak M}}
\def\grS{{\mathfrak S}}
\def\alp{{\alpha}} \def\bfalp{{\boldsymbol \alpha}}
\def\bet{{\beta}}  \def\bfbet{{\boldsymbol \beta}}
\def\gam{{\gamma}} 
\def\gamtil{{\tilde{\gam}}}
\def\Gam{{\Gamma}}
\def\del{{\delta}} 
\def\deltil{{\widetilde \delta}}
\def\tet{{\theta}}  
\def\vartet{{\vartheta}}
\def\kap{{\kappa}}
\def\lam{{\lambda}} \def\Lam{{\Lambda}}
\def\bfxi{{\boldsymbol \xi}}
\def\sig{{\sigma}}
\def\ome{{\omega}} 
\def\d{{\partial}}
\def\eps{\varepsilon}
\def\d{{\,{\rm d}}}
\def\meas{{\rm meas}}
\def\rank{{\rm rank}}
\def\Mtil{{\widetilde{M}}}
\def\na{{n_1}}
\def\nb{{n_2}}
\def\nor{{\rm nor}}
\newenvironment{blue}{\color{blue}}{}
\begin{document}
\title[Bihomogeneous forms in many variables]{Bihomogeneous forms in many variables}
\author[Damaris Schindler]{Damaris Schindler}
\address{School of Mathematics, University of Bristol, University Walk, Clifton, Bristol BS8 1TW, United Kingdom}
\email{maxds@bristol.ac.uk}

\subjclass[2010]{11D45 (11D72, 11P55)}
\keywords{bihomogeneous equations, Hardy-Littlewood method}

\begin{abstract}
We count integer points on bihomogeneous varieties using the Hardy-Littlewood method. The main novelty lies in using the structure of bihomogeneous equations to obtain asymptotics in generically fewer variables than would be necessary in using the standard approach for homogeneous varieties. Also, we consider counting functions where not all the variables have to lie in intervals of the same size, which arises as a natural question in the setting of bihomogeneous varieties.
\end{abstract}

\maketitle

\excludecomment{com}

\section{Introduction}

An important issue in the study of diophantine equations is to determine the density of integer points on algebraic varieties. In this setting the circle method is a powerful instrument, with which for example Birch \cite{Bir1961} and Schmidt \cite{Schmidt1985} obtained results in great generality. So far, most literature is concerned with counting integer points in boxes which are dilated by a large real number. In this case all the variables lie in intervals of comparable length. In this paper we study systems of bihomogeneous equations where it is natural to ask for similar asymptotic formulas while allowing different sizes for the variables involved. Furthermore, we use the structure of bihomogeneous equations to obtain results on the number of integer points on these varieties, using in generic cases fewer variables than needed in Birch's work \cite{Bir1961}.\par

First we need to introduce some notation. Let $\na, \nb$ and $R$ be positive integers. We use the vector notation $\bfx = (x_1,\ldots, x_\na)$ and $\bfy = (y_1,\ldots , y_\nb)$. We call a polynomial $F(\bfx;\bfy)\in\Z[\bfx,\bfy]$ a bihomogneneous form of bidegree $(d_1,d_2)$ if 
\begin{equation*}
F(\lam \bfx;\mu \bfy)= \lam^{d_1}\mu^{d_2}F(\bfx;\bfy),
\end{equation*}
for all $\lam,\mu\in \C$ and all vectors $\bfx,\bfy$. In the following we consider a system of bihomogeneous forms $F_i(\bfx,\bfy)\in\Z[\bfx,\bfy]$, for $1\leq i\leq R$. We are interested in the number of solutions to the system of equations
\begin{equation}\label{eqn2.1}
F_i(\bfx;\bfy)=0,
\end{equation}
for $1\leq i\leq R$, where we seek integer solutions in certain boxes. Thus,
let $\calB_1$ and $\calB_2$ be two boxes of side length at most $1$ in $\R^\na$ and $\R^\nb$, and let $P_1$ and $P_2$ be large real numbers. We write $P_1\calB_1$ for the set of $\bfx\in\R^\na$ such that $P_1^{-1}\bfx\in\calB_1$, and $P_2\calB_2$ analogously. Then we define $N(P_1,P_2)$ to be the number of integer solutions to the system of equations (\ref{eqn2.1}) with 
\begin{equation*}
\bfx\in P_1\calB_1 \mbox{ and } \bfy\in P_2\calB_2.
\end{equation*}
Furthermore, we introduce the affine variety $V_1^*$ in $\A_\C^{\na +\nb}$ given
by 
\begin{equation}\label{eqn6.3}
\rank \left( \frac{\partial F_i}{\partial x_j}\right)_{\substack{ 1\leq i\leq
    R\\ 1\leq j\leq \na}} <R.
\end{equation}
Similarly we define $V_2^*$ to be the affine variety in $\A_\C^{\na +\nb}$ given
by
\begin{equation}\label{eqn6.4}
\rank \left(\frac{\partial F_i}{\partial y_j}\right)_{\substack{1\leq i\leq
    R\\ 1\leq j\leq \nb}} <R.
\end{equation}
Our main result is an asymptotic formula for $N(P_1,P_2)$, which we can establish as soon as the codimensions of $V_1^*$ and $V_2^*$ are sufficiently large in terms of the number of equations, the bidegree of the polynomials and the logarithmic ratio between the two parameters $P_1$ and $P_2$. 

\begin{theorem}\label{thm2.1}
Let $P_1$ and $P_2$ be two large real numbers, and define $b=\frac{\log
  P_1}{\log P_2}$. Assume that $b\geq 1$. Furthermore, for all $1\leq i\leq R$, assume that the polynomials $F_i$
have bidegree $(d_1,d_2)$. Let $\na,\nb > R$ and $V_1^*$ and $V_2^*$ be the varieties given by equations (\ref{eqn6.3}) and (\ref{eqn6.4}). Assume that
\begin{equation*}
\na +\nb-\dim V_i^* > 2^{d_1+d_2-2} \max\{ R(R+1) (d_1+d_2-1), R(bd_1+d_2)\},
\end{equation*}
for $i=1,2$. Then we have the asymptotic formula
\begin{equation*}
N(P_1,P_2)= \sig P_1^{\na - R d_1}P_2^{\nb-R d_2} +O(P_1^{\na-Rd_1-\eps}P_2^{\nb-Rd_2}),
\end{equation*}
for some real $\sig$ and $\eps >0$. As usual, $\sig$ is the product of a singular series $\grS$ and a singular integral $J$ which are given in equations (\ref{eqn7.6}) and (\ref{eqn7.7}). Furthermore, the constant $\sig$ is positive if\par
i) the $F_i(\bfx;\bfy)$ have a common non-singular $p$-adic zero for all $p$,\par
ii) and if the $F_i(\bfx;\bfy)$ have a non-singular real zero in the box
$\calB_1\times \calB_2$ and $\dim V(0)= \na +\nb-R$, where $V(0)$ is the
affine variety given by the system of equations (\ref{eqn2.1}).
\end{theorem}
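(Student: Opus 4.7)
The plan is to apply the Hardy--Littlewood circle method, but with a Weyl differencing step adapted to the bihomogeneous structure of $\bfF$ so as to save variables compared with treating $\bfF$ as a single form of total degree $d_1+d_2$. First I would set up the exponential sum
\[ S(\bfalp) = \sum_{\bfx \in P_1 \calB_1 \cap \Z^{\na}} \sum_{\bfy \in P_2 \calB_2 \cap \Z^{\nb}} e(\bfalp \cdot \bfF(\bfx;\bfy)), \]
so that $N(P_1,P_2) = \int_{[0,1)^R} S(\bfalp)\,\d\bfalp$, and partition $[0,1)^R$ into major and minor arcs centred on rationals with small denominator, with widths chosen to reflect the two scales $P_1$ and $P_2$.

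For the minor arcs, the key input is a bihomogeneous Weyl-type inequality. Instead of differencing $d_1+d_2-1$ times uniformly, I would difference $d_1-1$ times in the $\bfx$-variables and $d_2-1$ times in the $\bfy$-variables, for a total of $d_1+d_2-2$ squaring steps, producing a multilinear exponential sum in $d_1$ copies of $\bfx$-difference vectors and $d_2$ copies of $\bfy$-difference vectors. This explains the $2^{d_1+d_2-2}$ exponent in the statement. Following Birch's geometry-of-numbers argument, one then relates the count of difference vectors on which the relevant multilinear coefficient is close to a rational with small denominator to the number of integer points on an auxiliary variety, and the codimension hypotheses on $V_1^*$ and $V_2^*$ produce the required bound. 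The two entries in the maximum arise naturally: the $R(R+1)(d_1+d_2-1)$ term is the exponent one needs in order to exclude contributions from the singular locus, essentially as in Birch \cite{Bir1961}, while the $R(bd_1+d_2)$ term is what secures a nontrivial saving against the trivial bound after accounting for the asymmetric box sizes $P_1=P_2^{b}$.

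For the major arcs I would proceed along standard lines: on each arc approximate $\bfalp \approx \bfa/q$, factor $S(\bfalp)$ as a complete exponential sum modulo $q$ times a smooth integral over the real box, and then sum and integrate to extract the singular series $\grS$ and singular integral $J$ appearing in (\ref{eqn7.6}) and (\ref{eqn7.7}). Convergence of $\grS$ and $J$ is forced by the same codimension hypotheses, via Birch-type rank estimates. Positivity of $\sig = \grS\cdot J$ then follows in the usual way: a common non-singular $p$-adic zero for every $p$, together with Hensel lifting, gives positive local densities and hence $\grS > 0$, while a non-singular real zero in $\calB_1\times\calB_2$, combined with $\dim V(0) = \na+\nb-R$, gives $J > 0$ via the implicit function theorem description of $J$ as a surface measure on the real variety.

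The main obstacle, and the principal novelty, is the minor-arc analysis in the asymmetric regime $b>1$. Differencing in the $\bfx$-variables produces auxiliary $\bfx$-difference variables ranging up to $P_1=P_2^{b}$, hence contributing a factor $P_2^{b}$ to any trivial bound, while $\bfy$-differencing contributes only $P_2$. Balancing these two contributions while propagating the codimension hypotheses through the geometry-of-numbers estimate is what forces the explicit $R(bd_1+d_2)$ term in the hypothesis; the combinatorics of tracking two independent blocks of difference variables, and the interplay between $P_1$ and $P_2$ in the major-arc widths and in the pruning step, will be the most delicate bookkeeping in the argument.
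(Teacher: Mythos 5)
Your overall plan matches the paper's: difference $d_1-1$ times in $\bfx$ and $d_2-1$ times in $\bfy$ to get $2^{d_1+d_2-2}$ rather than $2^{d_1+d_2-1}$, combine with a geometry-of-numbers shrinking argument in the style of Birch's Lemmas~2.3--2.4, and then run a major/minor arc analysis tracking two scales $P_1$ and $P_2$, with the two entries of the $\max$ arising respectively from the singular-locus dichotomy and from the cost of asymmetric boxes. The reading of the asymmetric differencing and the $b$-dependent term is essentially the same as the paper's, as is the treatment of the singular series and integral and the positivity criteria.

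There is, however, one concrete gap in the minor-arc step. You propose ``following Birch's geometry-of-numbers argument,'' which relies on Lemma 12.6 of Davenport: that lemma shrinks the box in the counting function for the system $\|L_i(\bfu)\| < a^{-1}Z$ and is proved using Mahler's duality between a lattice and its adjoint. Crucially, Davenport's statement and proof assume that the coefficient matrix $(\lam_{ij})$ is \emph{square and symmetric}, so that the adjoint lattice is (up to sign) the lattice itself, and the transposed counting problem is literally the same as the original. After you difference unequal numbers of times in $\bfx$ and in $\bfy$, the matrix entering the final multilinear form is an $\na\times\nb$ matrix $(\Gam(\bfxhat,\bfe_l;\bfyhat,\bfe_m))_{l,m}$ that is neither square nor symmetric, so the transposed problem is a genuinely different counting function and Davenport's Lemma 12.6 does not apply. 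The paper resolves this by proving a new Lemma~\ref{lem5.1}: a two-sided bound $U(Z_2)\ll\max\bigl((Z_2/Z_1)^{\nb}U(Z_1),\ (Z_2^{\nb}/Z_1^{\na})\,a^{\nb-\na}U^t(Z_1)\bigr)$, where $U^t$ counts solutions of the \emph{transposed} system $L_j^t$. This yields two separate branches in the resulting Weyl-type dichotomy, one feeding into $V_1^*$ (via iii) of Lemma~\ref{lem6.2}) and one into $V_2^*$ (via iv)), which is exactly why the theorem's hypothesis involves both codimensions. Your proposal needs this modified lemma and the attendant two-branch bookkeeping; without it, the shrinking step in the geometry of numbers cannot be executed.

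A smaller omission: the shrinking argument also has to bring both families of difference variables down to a \emph{common} scale $P^{\tet}$ (with $P = P_1^{d_1}P_2^{d_2}$), not just two independent scales, before one can conclude the Birch-type rank dichotomy; setting $P_1^{\tet_1}=P_2^{\tet_2}=P^\tet$ and tracking the dictionary between $\tet$, $\tet_1$, $\tet_2$ and $b$ is what makes the $bd_1+d_2$ quantity appear precisely where it does. This is in the spirit of your closing remark about delicate bookkeeping, but it is worth recording that the common-scale normalization is essential, not cosmetic.
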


We note that in our result the number of variables $\na$ and $\nb$ depends on the parameter $b$. However, this condition can be omitted if
\begin{equation*}
(R+1)(d_1+d_2-1) \geq (bd_1+d_2).
\end{equation*}
There are few examples in the literature where the number of integer
points on bihomogeneous varieties is studied. Robbiani (\cite{Rob2001}) and Spencer
(\cite{Spe2009}) treat bilinear varieties, and Van Valckenborgh
(\cite{ValA2011}) provides some results on bihomogeneous equations of bidegree
$(2,3)$. However, Van Valckenborgh only considers a diagonal situation,
whereas we are interested in a general set-up.\par

In our work we largely follow Birch's paper \cite{Bir1961}. However, we have
to take care of the different sizes of our boxes and their growth. The main
difference to Birch's work is in the form of Weyl's inequality we use. When
Birch works with forms of total degree $d$ he differentiates them $d-1$ times
via Weyl-differencing to obtain linear exponential sums. We apply that
differencing process separately with respect to the variables $\bfx$ and
$\bfy$, such that we only have to use this process $d_1-1$ times for the
variables $\bfx$ and $d_2-1$ times for the variables $\bfy$. In total we
therefore only need $d_1+d_2-2$ differencing steps. This approach was first mentioned to us by Prof. T. D. Wooley. One condition in Birch's theorem is that the total number of variables $\tilde{n}$ satisfies
\begin{equation*}
\tilde{n}-\dim V^*> R(R+1) (d-1) 2^{d-1},
\end{equation*}
which is essentially determined by the form of Weyl's lemma, which he uses. We obtain a similar condition for $d=d_1+d_2$, however we can replace the factor $2^{d-1}$ by $2^{d-2}$.\par
On the other hand, in our condition the quantities $\dim V_1^*$ and $\dim V_2^*$ appear instead of the dimension of $V^*$, which is the variety given by
\begin{equation*}
\rank \left( \frac{\partial F_i}{\partial z_j}\right) <R,
\end{equation*}
where $z_j$ run through all variables $x_1,\ldots, x_\na$ and $y_1,\ldots,
y_\nb$. We clearly have $V^*\subset V_i^*$ and thus $\dim V^*\leq \dim
V_i^*$, for $i=1,2$. However, we note that the singular locus of a
bihomogeneous variety is rather large, as soon as not both $d_1$ and $d_2$
equal $1$. If we assume for example $d_1>1$, then we see that $V^*$ contains a
linear subspace of dimension $\nb$, when we set $\bfx = 0$. The same holds of
course for $V_1^*$ and $V_2^*$. We assume for the moment that we have $n= \na
=\nb$ and that $d_1$ or $d_2$ is larger than $1$. Then we claim that in a generic situation we have
\begin{equation}\label{eqngen}
n= \dim V^*= \dim V_1^* = \dim V_2^*.
\end{equation}
Since each of the loci has dimension at least $n$, and $V^*\subset V_1^*$, it
suffices by symmetry to show that $\dim V_1^*= n$ in the generic situation.\par
To justify this claim, we note that for fixed bidegree $(d_1,d_2)$ with
$d_1,d_2\geq 1$ there are 
\begin{equation*}
m= {n+d_1-1\choose n-1} {n+d_2-1 \choose n-1}
\end{equation*}
monomials of bidegree $(d_1,d_2)$ in $(\bfx;\bfy)$. We fix an order of them
and associate to each $\bfa \in \A_\Q^m$ a bihomogeneous form
$F_\bfa(\bfx;\bfy)$. We write $\nabla_\bfx F$ for the gradient of a
bihomogeneous form $F(\bfx;\bfy)$ with respect to the variables $\bfx$. For $\bfa \in \P_\Q^{m-1}$ we set 
\begin{equation*}
X_{1,\bfa}= \{ (\bfx;\bfy) \in \P_\Q^{n-1}\times \P_\Q^{n-1}: \nabla_\bfx
F_\bfa (\bfx;\bfy)=0\}.
\end{equation*}
Furthermore, we consider the projective variety
\begin{align*}
\calV = \{ (\bfa;\bfx;\bfy)\in \P_\Q^{m-1}\times \P_\Q^{n-1}\times
\P_\Q^{n-1}: \nabla_\bfx F_\bfa(\bfx;\bfy)=0\},
\end{align*}
and the projection to the first factor $\pi: \calV \rightarrow
\P_\Q^{m-1}$. Define the function 
\begin{equation*}
\lam (\bfa) = \dim (\pi^{-1}(\bfa))= \dim X_{1,\bfa},
\end{equation*}
for $\bfa \in \P_\Q^{m-1}$. Then Corollary 11.13 of \cite{Harris} shows that
$\lam$ is an upper semi-continuous function on $\pi (\calV)$ in the
Zariski-topology of $\pi (\calV)$, which is itself
a closed subset of $\P_\Q^{m-1}$ by Theorem 3.13 of \cite{Harris}. Hence the
set 
\begin{equation*}
Y= \{ \bfa\in \P_\Q^{m-1}: \lam (\bfa) \geq n-1\}
\end{equation*}
is closed in $\pi(\calV)$ and hence in $\P_\Q^{m-1}$. We claim that
$Y\neq \P_\Q^{m-1}$. For this we consider the vector $\bfb \in
\A_\Q^m\setminus \{0\}$
such that 
\begin{equation*}
F_\bfb(\bfx;\bfy)= x_1^{d_1}y_1^{d_2}+\ldots + x_n^{d_1}y_n^{d_2}.
\end{equation*}
Then $X_{1,\bfb}$ is given by $x_iy_i = 0$ for $1\leq i\leq n$ if $d_1\geq 2$,
and empty if $d_1=1$. 
In any case, we have $\dim X_{1,\bfb}\leq n-2$. Therefore the set
\begin{equation*}
\{\bfa\in \P_\Q^{m-1}: \dim X_{1,\bfa}\leq n-2\}
\end{equation*}
is open and non-empty in $\P_\Q^{m-1}$, and so $\dim V_1^*=n$ in the generic case..\par

Another novelty in this work is the way we use of the geometry of numbers in the treatment of our exponential sums. Birch in his paper \cite{Bir1961} uses Lemma 12.6 from \cite{Dav2005}, which is a standard argument at this step. However, this lemma can only be applied if the involved matrices are symmetric, which is not the case in our situation. Our Lemma \ref{lem5.1} provides a form of generalising that lemma from Davenport to general matrices.\par
We note that a system of bihomogeneous polynomials $F_i(\bfx;\bfy)$ defines a
variety in biprojective space $\P^{\na-1}\times \P^{\nb-1}$. Hence, in the
context of the Manin conjectures, it is natural to count rational points on this variety with respect to the anticanonical height function in biprojective space. Our Thoerem \ref{thm2.1} is a first step in this direction and will be used to accomplish this goal in forthcoming work of the author. We note that it will turn out to be important that we can establish asymptotic formulas for $N(P_1,P_2)$ for parameters $P_1$ and $P_2$ which are not necessarily of the same size.\par
In the following $\bfalp$ is some vector $\bfalp = (\alp_1,\ldots, \alp_R)\in\R^R$, and we use the abbreviation $\bfalp \cdot\bfF:= \alp_1 F_1+\ldots +\alp_RF_R$. Furthermore, we frequently use summations over integer vectors $\bfx$ and $\bfy$, such that sums of the type $\sum_{\bfx\in P_1\calB_1}$ are to be understood as sums $\sum_{\bfx\in P_1\calB_1\cap\Z^\na}$. For a real number $x$ we write $\Vert x\Vert = \min_{z\in \Z}|x-z|$ for the distance to the nearest integer. As usual, we write $e(x)$ for $e^{2\pi i x}$.\par
The structure of this paper is as follows. After introducing some notation in section 2, we perform a Weyl-differencing process in section 3. In section 4 we are concerned with the lemma from the geometry of numbers mentioned above. This is used in section 5 to deduce a form of Weyl's inequality. In section 6 we set up the circle method, reduce the problem to a major arc situation and treat the singular series and integral. The proof of Theorem \ref{thm2.1} is finished in the final section.\par
\textbf{Acknowledgements.} During part of the work on this paper the author was
supportet by a DAAD scholarship. Furthermore, the author would like to thank Prof. T. D. Wooley for suggesting this area of research.

\section{Exponential sums}
We start in defining the exponential sum
\begin{equation*}
S(\bfalp)= \sum_{\bfx\in P_1\calB_1}\sum_{\bfy\in P_2\calB_2} e(\bfalp \cdot \bfF (\bfx;\bfy)),
\end{equation*}
for some $\bfalp\in \R^R$. One goal of this section is to perform $(d_1-1)$
times a Weyl-differencing process with respect to the variables $\bfx$ and
$(d_2-1)$ times the same differencing process with respect to $\bfy$. For this
we write each bihomogeneous form $F_i$ as
\begin{equation*}
F_i(\bfx;\bfy)=\sum_{\bfj =1}^\na\sum_{\bfk =1}^\nb F^{(i)}_{j_1,\ldots,
  j_{d_1};k_1,\ldots, k_{d_2}} x_{j_1}\ldots x_{j_{d_1}}y_{k_1}\ldots
y_{k_{d_2}},
\end{equation*}
with the $F^{(i)}_{j_1,\ldots,  j_{d_1};k_1,\ldots, k_{d_2}}$ symmetric in
$(j_1,\ldots, j_{d_1})$ and $(k_1,\ldots, k_{d_2})$. Here the summations are
over $j_1,\ldots,j_{d_1}$ from $1$ to $\na$, and $k_1,\ldots,k_{d_2}$ from $1$ to $\nb$, and we
write $\bfj$ and $\bfk$ for $(j_1,\ldots, j_{d_1})$ and $(k_1,\ldots, k_{d_2})$. Without loss of
generality we can assume the $F^{(i)}_{\bfj;\bfk}$ to be integers (otherwise
multiply with some suitable constant).\par
Let $d_2>1$. We start our differencing process in applying H\"older's inequality to
obtain
\begin{equation}\label{eqneins}
|S(\bfalp)|^{2^{d_2-1}}\ll P_1^{\na(2^{d_2-1}-1)}\sum_{\bfx\in
  P_1\calB_1}|S_\bfx(\bfalp)|^{2^{d_2-1}},
\end{equation}   
with the exponential sum
\begin{equation*}
S_\bfx(\bfalp)=\sum_{\bfy \in P_2\calB_2} e(\bfalp\cdot \bfF (\bfx;\bfy)).
\end{equation*}
Next we use a form of Weyl's inequality as in Lemma 11.1 in
\cite{Schmidt1985} to bound $|S_\bfx(\bfalp)|^{2^{d_2-1}}$. For this we need to introduce some notation. Let $\calU = P_2 \calB_2$, write $\calU^D=\calU-\calU$ for the difference set and define 
\begin{equation*}
\calU (\bfy^{(1)},\ldots,\bfy^{(t)})=\cap_{\eps_1=0}^1\ldots \cap_{\eps_t =
  0}^1 (\calU -\eps_1\bfy^{(1)} -\ldots - \eps_t \bfy^{(t)}).
\end{equation*}
Following the notation of \cite{Schmidt1985}, we define the polynomial $\calF (\bfy)=\bfalp \cdot \bfF (\bfx;\bfy)$. Furthermore we set
\begin{equation*}
\calF_d(\bfy_1,\ldots, \bfy_d)= \sum_{\eps_1=0}^1\ldots \sum_{\eps_d=0}^1 (-1)^{\eps_1+\ldots + \eps_d}\calF (\eps_1\bfy_1+\ldots +\eps_d \bfy_d),
\end{equation*}
and $\calF_0=0$ identically.\par
In our estimate for $|S_\bfx(\bfalp)|^{2^{d_2-1}}$ we want to avoid absolute values in the resulting bound such that we directly consider
equation 11.2 in \cite{Schmidt1985}. This delivers the estimate
\begin{align*}
|S_{\bfx}(\bfalp)|^{2^{d_2-1}}\ll &|\calU^D|^{2^{d_2-1}-d_2}
\sum_{\bfy^{(1)}\in \calU^D} \ldots \\ &\sum_{\bfy^{(d_2-2)}\in\calU^D} \left|
  \sum_{\bfy^{(d_2-1)} \in \calU (\bfy^{(1)},\ldots \bfy^{(d_2-2)})}
  e(\calF_{d_2-1}(\bfy^{(1)},\ldots,\bfy^{(d_2-1)}))\right|^2,
\end{align*}
 We note that all the summation regions for the
$\bfy^{(j)}$ are boxes, since $P_2\calB_2$ is a box and intersections and
differences of boxes are again boxes. As in the proof of Lemma 11.1 in
\cite{Schmidt1985} we consider two elements $\bfz,\bfz'\in \calU
(\bfy^{(1)},\ldots \bfy^{(d_2-2)})$ and note that
\begin{align*}
&\calF_{d_2-1} (\bfy^{(1)},\ldots, \bfz)-\calF_{d_2-1} (\bfy^{(1)},\ldots,
\bfz')\\ =& \calF_{d_2-1}(\bfy^{(1)},\ldots,\bfy^{(d_2-2)},\bfy^{(d_2)})- \calF_{d_2-1}
(\bfy^{(1)},\ldots\bfy^{(d_2-2)},\bfy^{(d_2-1)}+\bfy^{(d_2)})\\
= &\calF_{d_2} (\bfy^{(1)},\ldots,\bfy^{(d_2-1)},\bfy^{(d_2)}) - \calF_{d_2-1}
(\bfy^{(1)},\ldots,\bfy^{(d_2-2)},\bfy^{(d_2-1)}),
\end{align*}
for some $\bfy^{(d_2-1)} \in \calU (\bfy^{(1)},\ldots,\bfy^{(d_2-2)})^D$ and
$\bfy^{(d_2)} \in \calU (\bfy^{(1)},\ldots,\bfy^{(d_2-1)})$. Thus, we obtain
the bound
\begin{align*}
|S_\bfx(\bfalp)|^{2^{d_2-1}} & \ll
P_2^{\nb(2^{d_2-1}-d_2)}\sum_{\bfy^{(1)}\in\calU^D}\ldots
\sum_{\bfy^{(d_2-2)}\in \calU^D} \sum_{\bfy^{(d_2-1)}\in
  \calU(\bfy^{(1)},\ldots,\bfy^{(d_2-2)})^D}\\
& \sum_{\bfy^{(d_2)}\in\calU(\bfy^{(1)},\ldots, \bfy^{(d_2-1)})}
  e(\calF_{d_2}(\bfy^{(1)},\ldots,\bfy^{(d_2)})-\calF_{d_2-1}(\bfy^{(1)},\ldots,\bfy^{(d_2-1)})).
\end{align*}
By Lemma 11.4 of Schmidt's work \cite{Schmidt1985} the polynomial $\calF_{d_2}$ is just the
multilinear form associated to $\calF$. In our case we have
\begin{align*}
&\calF_{d_2}(\bfy^{(1)},\ldots,\bfy^{(d_2)})-\calF_{d_2-1}
(\bfy^{(1)},\ldots,\bfy^{(d_2-1)}) \\ = & \sum_{i=1}^R \alp_i
\sum_{\bfj}\sum_\bfk F_{\bfj,\bfk}^{(i)} x_{j_1}\ldots x_{j_{d_1}} h_\bfk
  (\bfy^{(1)},\ldots, \bfy^{(d_2)} ),
\end{align*}
with 
\begin{equation*}
h_\bfk (\bfy^{(1)},\ldots,\bfy^{(d_2)})= d_2! y_{k_1}^{(1)} \ldots
y_{k_{d_2}}^{(d_2)} + \tilde{h_\bfk}(\bfy^{(1)},\ldots,\bfy^{(d_2-1)}),
\end{equation*}
where $\tilde{h_\bfk}$ are some homogeneous polynomials of degree $d_2$
independent of $\bfy^{(d_2)}$.\par
We come back to estimating $\sum_{\bfx \in P_1\calB_1}
|S_{\bfx}(\bfalp)|^{2^{d_2-1}}$. Set
$\tilde{d}=d_1+d_2-2$. We write  and $\bfytil=(\bfy^{(1)},\ldots,\bfy^{(d_2)})$ and set
\begin{equation*}
S_\bfytil(\bfalp)=\sum_{\bfx\in P_1\calB_1}e\left( \sum_i\alp_i\sum_\bfj\sum_\bfk F_{\bfj,\bfk}^{(i)}x_{j_1}\ldots x_{j_{d_1}}
  h_k(\bfytil)\right).
\end{equation*}
In equation (\ref{eqneins}) we interchange the summation over
$\sum_{\bfx}$ with all the summations $\sum_{\bfy^{(i)}}$ from the bound for $\sum_{\bfx \in P_1\calB_1}
|S_{\bfx}(\bfalp)|^{2^{d_2-1}}$. An application of H\"older's inequality now delivers
\begin{align*}
|S(\bfalp)|^{2^{\tilde{d}}}\ll
P_1^{\na(2^{\tilde{d}}-2^{d_1-1})}P_2^{\nb(2^{\tilde{d}}-d_2)}
\sum_{\bfy^{(1)}}\ldots \sum_{\bfy^{(d_2)}} |S_\bfytil
(\bfalp)|^{2^{d_1-1}}.
\end{align*}

Applying the same differencing process as before to $S_\bfytil(\bfalp)$ leads
us to 
\begin{equation}\label{eqn4.1}
|S(\bfalp)|^{2^{\tilde{d}}}\ll
P_1^{\na(2^{\tilde{d}}-d_1)}P_2^{\nb(2^{\tilde{d}}-d_2)}
\sum_{\bfy^{(1)}}\ldots\sum_{\bfy^{(d_2)}}\sum_{\bfx^{(1)}}\ldots|\sum_{\bfx^{(d_1)}}e(\gam
(\bfxtil;\bfytil))|,
\end{equation}
with 
\begin{equation*}
\gam(\bfxtil;\bfytil)=\sum_i\alp_i\sum_\bfj\sum_\bfk F_{\bfj,\bfk}^{(i)}g_\bfj(\bfxtil)h_\bfk(\bfytil).
\end{equation*}
As before we have
\begin{equation*}
g_\bfj(\bfx^{(1)},\ldots,\bfx^{(d_1)})=d_1! x_{j_1}^{(1)}\ldots
x_{j_{d_1}}^{(d_1)}+\tilde{g}_j(\bfx^{(1)},\ldots,\bfx^{(d_1-1)}),
\end{equation*}
with some homogeneous form $\tilde{g}_\bfj$ of degree $d_1$,and all summations
over $\bfx^{(1)},\ldots,\bfx^{(d_1)}$ run over intervals of length at most
$2P_1$. Note that equation (\ref{eqn4.1}) holds for all integers $d_1\geq 1$
and $d_2\geq 1$. Next we introduce the notation $\bfxhat =
(\bfx^{(1)},\ldots,\bfx^{(d_1-1)})$ and $\bfyhat$ analogously, and turn towards estimating the sum
\begin{equation*}
\sum(\bfxhat,\bfyhat):=\sum_{\bfy^{(d_2)}}\left| \sum_{\bfx^{(d_1)}}e(\gam(\bfxtil;\bfytil))\right|.
\end{equation*}
First we have
\begin{equation*}
\left|\sum_{\bfx^{(d_1)}}e(\gam(\bfxtil,\bfytil))\right|\ll \prod_{l=1}^\na \min
\left(P_1,\Vert\gamtil (\bfxhat,\bfe_l;\bfytil)\Vert^{-1}\right),
\end{equation*}
where $\bfe_l$ is the $l$th unit vector and $\gamtil$ is given by
\begin{equation*}
\gamtil(\bfxtil;\bfytil)=d_1!\sum_i\alp_i\sum_\bfj\sum_\bfk F_{\bfj,\bfk}^{(i)}
x_{j_1}^{(1)}\ldots x_{j_{d_1}}^{(d_1)} h_k(\bfytil).
\end{equation*}
Next we follow Davenport's analysis in \cite{Dav1959}, section 3. For some
real number $z$ we write $\{z\}$ for the fractional part, and use the notation
$\bfr =(r_1,\ldots,r_n)$. For some integers $0\leq r_l <P_1$ let
$\calA(\bfxhat;\bfyhat;\bfr)$ be the set of $\bfy^{(d_2)}$ in the above summation such that
\begin{equation*}
r_lP_1^{-1}\leq \{\gamtil(\bfxhat,\bfe_l;\bfyhat,\bfy^{(d_2)})\}<(r_l+1)P_1^{-1},
\end{equation*}
for $1\leq l\leq \na$. Then we can estimate
\begin{equation*}
\sum_{\bfy^{(d_2)}}\left|\sum_{\bfx^{(d_1)}} e(\gam(\bfxtil;\bfytil))\right|\ll \sum_{\bfr}
A(\bfxhat;\bfyhat;\bfr)\prod_{l=1}^n
\min\left(P_1,\max\left(\frac{P_1}{r_l},\frac{P_1}{P_1-r_l-1}\right)\right),
\end{equation*}
where the summation is over all vectors $\bfr$ with $0\leq r_l<P_1$ for all
$l$, and $A(\bfxhat;\bfyhat;\bfr)$ is the cardinality of the set
$\calA(\bfxhat;\bfyhat;\bfr)$. Our next goal is to find a bound
for $A(\bfxhat;\bfyhat;\bfr)$, which is independent of $\bfr$. For this
consider two vectors $\bfu$ and $\bfv$ counted by that quantity. Then we have
\begin{equation*}
\Vert\gamtil(\bfxhat,\bfe_l;\bfyhat,\bfu)-\gamtil(\bfxhat,\bfe_l;\bfyhat,\bfv)\Vert<P_1^{-1},
\end{equation*}
for $1\leq l\leq \na$. Define the multilinear form 
\begin{equation*}
\Gam (\bfxtil;\bfytil)= d_1!d_2!\sum_i\alp_i\sum_\bfj\sum_\bfk F_{\bfj,\bfk}^{(i)}
x_{j_1}^{(1)}\ldots x_{j_{d_1}}^{(d_1)} y_{k_1}^{(1)}\ldots
y_{k_{d_2}}^{(d_2)},
\end{equation*}
and let $N(\bfxhat;\bfyhat)$ be the number of integer vectors $\bfy\in
(-P_2,P_2)^\nb$ such that 
\begin{equation*}
\Vert\Gam (\bfxhat,\bfe_l;\bfyhat,\bfy) \Vert<P_1^{-1},
\end{equation*}
for all $1\leq l\leq \na$. Observe that 
\begin{equation*}
\gamtil(\bfxhat,\bfe_l;\bfyhat,\bfu)-\gamtil(\bfxhat,\bfe_l;\bfyhat,\bfv)= \Gam (\bfxhat,\bfe_l;\bfyhat,\bfu-\bfv).
\end{equation*}
Thus, we have
\begin{equation*}
A(\bfxhat;\bfyhat;\bfr)\leq N(\bfxhat;\bfyhat),
\end{equation*} 
for all $\bfr$ under consideration. This gives us finally the bound
\begin{equation*}
\sum_{\bfy^{(d_2)}}\left|\sum_{\bfx^{(d_1)}}e(\gam(\bfxtil;\bfytil))\right|\ll
N(\bfxhat;\bfyhat)(P_1\log P_1)^\na.
\end{equation*}
Furthermore, let $M_1(\bfalp; P_1;P_2;P_1^{-1})$ be the number of integer vectors
$\bfxhat\in (-P_1,P_1)^{(d_1-1)\na}$ and $\bfytil \in (-P_2,P_2)^{d_2 \nb}$, such
that 
\begin{equation*}
\Vert \Gam (\bfxhat,\bfe_l;\bfytil)\Vert <P_1^{-1}
\end{equation*}
holds for all $1\leq l\leq \na$. Summing over all $\bfxhat$ and $\bfyhat$ in
equation (\ref{eqn4.1}) gives us the bound
\begin{equation*}
|S(\bfalp)|^{2^{\tilde{d}}}\ll
P_1^{\na(2^\dtil-d_1+1)+\eps}P_2^{\nb(2^\dtil-d_2)} M_1(\bfalp;P_1;P_2;P_1^{-1}).
\end{equation*}
The above discussion delivers now the following lemma.

\begin{lemma}\label{lem4.1}
Let $P$ be a large real number, and $\eps >0$. Then, for some real $\kap >0$, one has either the upper bound
\begin{equation*}
|S(\bfalp)|<P_1^{\na+\eps}P_2^\nb P^{-\kap},
\end{equation*}
or the lower bound 
\begin{equation*}
M_1(\bfalp;P_1;P_2;P_1^{-1})\gg P_1^{\na(d_1-1)}P_2^{\nb d_2}P^{-2^\dtil \kap}.
\end{equation*}
\end{lemma}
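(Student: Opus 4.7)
The plan is to deduce the lemma directly from the inequality displayed immediately before it, namely
\begin{equation*}
|S(\bfalp)|^{2^{\tilde{d}}} \ll P_1^{\na(2^{\tilde{d}} - d_1 + 1) + \eps'} P_2^{\nb(2^{\tilde{d}} - d_2)} M_1(\bfalp; P_1; P_2; P_1^{-1}),
\end{equation*}
for a suitably small $\eps' > 0$ which we are free to choose. The lemma is a dichotomy, so the natural approach is: assume the first (Weyl-type) bound fails and extract the second (geometry-of-numbers-type) bound as a consequence.

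First I would fix $\kap > 0$ and suppose that the upper bound $|S(\bfalp)| < P_1^{\na + \eps} P_2^{\nb} P^{-\kap}$ fails, so that
\begin{equation*}
|S(\bfalp)| \ge P_1^{\na + \eps} P_2^{\nb} P^{-\kap}.
\end{equation*}
Raising this to the $2^{\tilde{d}}$-th power and combining with the displayed inequality above yields
\begin{equation*}
P_1^{\na \cdot 2^{\tilde{d}} + \eps \cdot 2^{\tilde{d}}} P_2^{\nb \cdot 2^{\tilde{d}}} P^{-2^{\tilde{d}} \kap} \ll P_1^{\na(2^{\tilde{d}} - d_1 + 1) + \eps'} P_2^{\nb(2^{\tilde{d}} - d_2)} M_1(\bfalp; P_1; P_2; P_1^{-1}).
\end{equation*}
Solving for $M_1$, the $P_2$-exponents collapse to $\nb d_2$ and the $P_1$-exponents to $\na(d_1 - 1) + \eps \cdot 2^{\tilde{d}} - \eps'$. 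Choosing the auxiliary $\eps'$ appearing in the preceding inequality smaller than $\eps \cdot 2^{\tilde{d}}$ (which is permissible because the constant in $\ll$ may depend on $\eps'$ but $\eps$ in the lemma is fixed in advance), the $P_1$-exponent is bounded below by $\na(d_1 - 1)$, giving the asserted lower bound
\begin{equation*}
M_1(\bfalp; P_1; P_2; P_1^{-1}) \gg P_1^{\na(d_1 - 1)} P_2^{\nb d_2} P^{-2^{\tilde{d}} \kap}.
\end{equation*}

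There is no real obstacle here: the entire content of the lemma is a bookkeeping dichotomy extracted from the Weyl-differencing computation of the preceding pages. The only point to check is that the loss of $\eps'$ in the $P_1$-exponent of the differencing estimate is absorbed by the gain of $\eps \cdot 2^{\tilde{d}}$ coming from the assumed failure of the Weyl bound, which is immediate as $2^{\tilde{d}} \ge 1$.
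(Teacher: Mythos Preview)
Your proposal is correct and matches the paper's approach exactly. The paper does not even supply a separate proof: it simply states that ``the above discussion delivers now the following lemma,'' and your argument is precisely the standard two-line extraction of the dichotomy from the displayed Weyl-differencing bound, with the $\eps$-bookkeeping handled correctly.
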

Next we want to apply the geometry of numbers to $M_1(\bfalp;P_1;P_2;P_1^{-1})$, similar as done in
Birch's work \cite{Bir1961} in Lemma 2.3 and Lemma 2.4. For this we need a
modified version of a certain lemma from the geometry of numbers which we
give in the following section.

\section{A lemma from the geometry of numbers}

For some integers $\na$ and $\nb$ and real numbers $\lam_{ij}$ for $1\leq
i\leq \na$ and $1\leq j\leq \nb$, we
consider the linear forms
\begin{equation*}
L_i(\bfu)=\sum_{j=1}^\nb \lam_{ij}u_j,
\end{equation*}
and the linear forms corresponding to the transposed matrix of $(\lam_{ij})$
given by
\begin{equation*}
L_j^t(\bfu)=\sum_{i=1}^\na \lam_{ij}u_i.
\end{equation*}
Furthermore, for some real $a>1$ we define $U(Z)$ to be the number of integer
tuples $u_1,\ldots,u_\nb,\ldots,u_{\na+\nb}$, which satisfy
\begin{equation*}
|u_j| <aZ,
\end{equation*}
for $1\leq j\leq \nb$ and
\begin{equation*}
|L_i(u_1,\ldots,u_\nb)-u_{\nb+i}|<a^{-1}Z,
\end{equation*}
for $1\leq i\leq \na$. Let $U^t(Z)$ be defined analogously with $L_i$
replaced by the linear system $L_j^t$. Our goal of this section is to
establish the following lemma using the geometry of numbers.

\begin{lemma}\label{lem5.1}
If $0<Z_1\leq Z_2\leq 1$, then one has the bound
\begin{equation*}
U(Z_2) \ll \max \left( \left(\frac{Z_2}{Z_1}\right)^\nb
  U(Z_1),\frac{Z_2^\nb}{Z_1^\na} a^{\nb-\na}U^t(Z_1)\right).
\end{equation*}
\end{lemma}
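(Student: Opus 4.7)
My plan is to realize $U(Z_2)$, $U(Z_1)$ and $U^t(Z_1)$ as lattice point counts inside scaled centrally symmetric convex bodies, and then to invoke Minkowski's second theorem together with Mahler's duality principle. Let $L=(\lam_{ij})$ be the $\na\times\nb$ coefficient matrix and let $M\colon\R^{\na+\nb}\to\R^{\na+\nb}$ be the block linear map with top row $(I_\nb,0)$ and bottom row $(L,-I_\na)$. Setting $\Lam=M\Z^{\na+\nb}$ and $K(1)=[-a,a]^\nb\times[-a^{-1},a^{-1}]^\na$, one checks that $U(Z)=|\Lam\cap Z\cdot K(1)|$, and analogously $U^t(Z)=|\Lam^t\cap Z\cdot K^t(1)|$ on the transpose side, with $K^t(1)=[-a,a]^\na\times[-a^{-1},a^{-1}]^\nb$ and $\Lam^t$ a lattice of covolume one. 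Letting $\mu_1\le\cdots\le\mu_{\na+\nb}$ be the successive minima of $K(1)$ with respect to $\Lam$, Minkowski's second theorem gives $\prod_i\mu_i\asymp a^{\na-\nb}$, and the standard counting estimate (Chapter~12 of \cite{Dav2005}) supplies $U(Z)\asymp\prod_i\max(1,Z/\mu_i)$, and similarly $U^t(Z)\asymp\prod_i\max(1,Z/\nu_i)$ in terms of the analogous transpose-side minima $\nu_i$.

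Let $k$ be the number of indices $i$ with $\mu_i\le Z_2$. If $k\le\nb$, a direct factor-by-factor comparison of $U(Z_2)$ with $U(Z_1)$ gives $U(Z_2)/U(Z_1)\le(Z_2/Z_1)^{k}\le(Z_2/Z_1)^{\nb}$, which supplies the first term in the maximum.

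If instead $k\ge\nb+1$, I would invoke Mahler's duality. Considering the dual lattice $\Lam^{*}=(M^{T})^{-1}\Z^{\na+\nb}$ with its successive minima $\mu_i^{*}$ relative to the polar body $K(1)^{\circ}$, Mahler's theorem gives $\mu_i\,\mu_{\na+\nb+1-i}^{*}\asymp 1$. A short computation shows that the coordinate permutation $\Phi$ swapping the first $\nb$ and last $\na$ entries sends $\Lam^{*}$ onto $\Lam^t$ (up to harmless sign flips, which preserve successive minima of symmetric bodies), and one has the sandwich $\Phi(K(1)^{\circ})\subseteq K^t(1)\subseteq(\na+\nb)\cdot\Phi(K(1)^{\circ})$, which follows by a coordinatewise comparison between the $\ell^{1}$-octahedron $K(1)^{\circ}$ and the $\ell^{\infty}$-box $K^t(1)$. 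Consequently $\nu_i\asymp\mu_i^{*}\asymp 1/\mu_{\na+\nb+1-i}$, and hence $U^t(Z_1)\asymp\prod_j\max(1,Z_1\mu_j)$.

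To extract the second term of the maximum, I would rewrite $U(Z_2)\asymp Z_2^{k}\cdot a^{\nb-\na}\cdot\prod_{i>k}\mu_i$ using the Minkowski product formula, and isolate from $U^t(Z_1)$ the subproduct over the tail $\{i\ge n_0\}$, where $n_0=\min\{i:\mu_i\ge 1/Z_1\}$. A short case split into $n_0\le k$ and $n_0>k$, using $\mu_i\ge 1/Z_1$ for $i\ge n_0$ in the first subcase and $\mu_i<1/Z_1$ for $k<i<n_0$ in the second, produces in both cases the uniform lower bound $U^t(Z_1)\gg Z_1^{\na+\nb-k}\prod_{i>k}\mu_i$. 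Combining this with the above expression for $U(Z_2)$ and using $Z_1Z_2\le 1$ together with $k-\nb\ge 1$ gives $U(Z_2)\ll(Z_2^{\nb}/Z_1^{\na})\,a^{\nb-\na}\,U^t(Z_1)$, which is the second term in the maximum. The main technical step, I expect, is the sandwich identification linking the successive minima of $\Lam^t$ with respect to the box $K^t(1)$ to those of the Mahler dual lattice $\Lam^{*}$ with respect to the polar octahedron $K(1)^{\circ}$; the rest is essentially bookkeeping with Minkowski's product formula.
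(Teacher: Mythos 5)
Your proof is correct and is in essence the same argument as the paper's: both realize the two counts as lattice points of adjoint lattices in scaled symmetric bodies, apply Minkowski's second theorem and the standard product formula $\#(\Lambda\cap ZK)\asymp\prod_i\max(1,Z/\mu_i)$, and invoke Mahler's duality $\mu_i\,\mu^*_{\na+\nb+1-i}\asymp 1$. The one structural difference is the case analysis: the paper splits on $\mu\le\nb$ versus $\mu>\nb$, and within the latter introduces an auxiliary constant $C_1$ (deducing $\omega<\na$ via Mahler when $R_{\nb+1}<C_1$, and arguing separately when $R_{\nb+1}\ge C_1$), plus separate treatment of the degenerate edge cases $Z_1<R_1$ or $Z_1<S_1$. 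Your two-way split $k\le\nb$ versus $k>\nb$, combined with the uniform lower bound $U^t(Z_1)\gg Z_1^{\na+\nb-k}\prod_{i>k}\mu_i$, subsumes all of these in one stroke and is cleaner; the edge cases are handled automatically by the $\max(1,\cdot)$ formulation.

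One small remark: the ``short case split into $n_0\le k$ and $n_0>k$'' that you flag as needing verification is unnecessary. Since $U^t(Z_1)\asymp\prod_j\max(1,Z_1\mu_j)$, you get the desired lower bound in a single line by discarding the factors with $j\le k$ and using $\max(1,Z_1\mu_j)\ge Z_1\mu_j$ for $j>k$, namely
\begin{equation*}
\prod_j\max(1,Z_1\mu_j)\;\ge\;\prod_{j>k}Z_1\mu_j\;=\;Z_1^{\na+\nb-k}\prod_{j>k}\mu_j,
\end{equation*}
regardless of where $n_0$ falls relative to $k$. Also, the product formula you cite from Chapter~12 of \cite{Dav2005} is stated there for Euclidean balls; to apply it to the box $K(1)$ you should note that a diagonal rescaling sends $K(1)$ to a cube, comparable to a ball up to a factor $\sqrt{\na+\nb}$ (this is in fact exactly the reduction the paper carries out explicitly when it passes between $U(Z)$ and $U_0(Z)$). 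Neither point is a gap, just tightening.
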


In the case of $\na = \nb$ and symmetric coefficients $\lam_{ij}$, i.e. $\lam_{ij}=\lam_{ji}$
for all $i,j$, this is just Lemma 12.6 from \cite{Dav2005}. In our proof we
follow mainly the arguments of Davenport in section 12 of \cite{Dav2005}.

\begin{proof}
We start in defining the lattice $\Gam$ via the matrix
\begin{align*}
\Lam = \left(\begin{array}{cc} a^{-1}I_\nb & 0\\a\lam & aI_\na\end{array}\right),
\end{align*}
where we write $I_n$ for the $n$-dimensional identity matrix and $\lam$ for
the $\na\times \nb$-matrix with entries $\lam_{ij}$. Let $R_1,\ldots, R_{\na+\nb}$ be the
successive minima of $\Lam$. Furthermore consider the adjoint lattice given by
\begin{align*}
M=(\Lam^t)^{-1}=\left(\begin{array}{cc} aI_\nb& -a\lam^t\\0 &
    a^{-1}I_\na \end{array}\right),
\end{align*}
where $\lam^t$ is the transposed matrix of $\lam$. As pointed out by Davenport in
section 12 of \cite{Dav2005}, $M$ has the same
successive minima $S_1,\ldots,S_{\na+\nb}$ as the lattice
\begin{align*}
\Mtil = \left(\begin{array}{cc} a^{-1}I_\na &0\\a\lam^t&aI_\nb\end{array}\right).
\end{align*}
Note that $M$ and $\Lam$ are by
construction adjoint lattices. Set $b=a^{(\nb-\na)/(\na+\nb)}$ and consider the normalised lattices $\Lam^\nor = b \Lam$ and $M^\nor = b^{-1}\Mtil$. Then $\Lam^\nor$ and $M^\nor$ are adjoint lattices of determinant $1$. Let $R_i^\nor$, $1\leq i\leq \na +\nb$ and $S_i^\nor$, $1\leq i\leq \na +\nb$ be the corresponding succissive minima. Then Mahler's lemma (see for
example Lemma 12.5
of \cite{Dav2005}) delivers
\begin{equation*}
R^\nor_k \asymp (S^\nor_{\na+\nb +1-k})^{-1},
\end{equation*}
for all $1\leq k\leq \na +\nb$.\par
We note that $R_i^\nor = b R_i$ and $S_i^\nor = b^{-1}S_i$ for all $i$, and hence we have the relations
\begin{equation*}
R_k \asymp S_{\na+\nb +1-k}^{-1},
\end{equation*}
for all $1\leq k\leq \na +\nb$.\par
Next let $U_0(Z)$ and $U_0^t(Z)$ be the number of lattice points on $\Lam$ and
$\Mtil$, whose euclidean
norm is bounded by $Z$. Then one has
\begin{equation*}
U_0(Z)\leq U(Z) \leq U_0(\sqrt{\na+\nb}Z),
\end{equation*}
and the analogous relation holds for $U^t$ and $U_0^t$. Therefore, we see that
it is enough to establish the bound
\begin{equation*}
U_0(Z_2) \ll_{\na,\nb} \max \left( \left(\frac{Z_2}{Z_1}\right)^\nb
  U_0(Z_1),\frac{Z_2^\nb}{Z_1^\na}a^{\nb-\na}U_0^t(Z_1)\right),
\end{equation*} 
for all $0<Z_1\leq Z_2\leq \sqrt{\na+\nb}$.\par
For this we first assume that $R_1\leq Z_1$ and $S_1\leq Z_1$, and then define the natural numbers $\mu,\nu$ and $\ome$ by
\begin{equation*}
R_\nu\leq Z_1 <R_{\nu+1}, \quad R_\mu\leq Z_2<R_{\mu+1},
\end{equation*}
and
\begin{equation*}
S_\ome \leq Z_1<S_{\ome+1}.
\end{equation*}
Let $U_0^\nor (Z)$ be the number of lattice points on $\Lam^\nor$ with euclidean norm bounded by $Z$. Note that $R_\nu \leq Z_1 < R_{\nu +1}$ is the same as saying that $R_\nu^\nor \leq bZ_1 < R_{\nu +1}^\nor$, and that one has $U_0(Z)= U_0^\nor (bZ)$. Hence Lemma 12.4 of \cite{Dav2005} delivers
\begin{equation*}
U_0(Z_1)=U_0^\nor (bZ_1) \asymp \frac{(bZ_1)^\nu}{R_1^\nor \ldots R_\nu^\nor}= \frac{Z_1^\nu}{R_1\ldots R_\nu}.
\end{equation*}
With the same argument applied to $U_0(Z_2)$ we obtain
\begin{equation*}
\frac{U_0(Z_2)}{U_0(Z_1)}\asymp \frac{Z_2^\mu R_1\ldots R_\nu}{Z_1^\nu
  R_1\ldots R_\mu}.
\end{equation*}
If $\mu\leq \nb$, then we can estimate
\begin{align*}
\frac{U_0(Z_2)}{U_0(Z_1)}\ll \frac{Z_2^\mu}{Z_1^\nu R_{\nu +1}\ldots R_\mu}
\ll \left( \frac{Z_2}{Z_1}\right)^\mu\ll \left(\frac{Z_2}{Z_1}\right)^\nb,
\end{align*}
which is good enough for our lemma. If we have $\mu > \nb$ and $R_{\nb+1}\geq C_1$
for some positive constant $C_1$ to be chosen later, then we have
\begin{equation*}
\frac{Z_2^\mu}{Z_1^\nu R_{\nu +1}\ldots R_\mu}\ll \frac{Z_2^\nb}{Z_1^\nb
  R_{\nb+1}\ldots R_\mu}\ll_{\na,\nb,C_1} \left(\frac{Z_2}{Z_1}\right)^\nb,
\end{equation*}
for $\nu\leq \nb$, and
\begin{equation*}
\frac{Z_2^\mu}{Z_1^\nu R_{\nu +1}\ldots R_\mu} \ll_{C_1} 1 \ll
\left(\frac{Z_2}{Z_1}\right)^\nb,
\end{equation*}
for $\nu >\nb$ using $Z_1\geq R_{\nb+1}\geq C_1$.\par
Next assume $\mu > \nb$ and $R_{\nb +1} <C_1$, and note that we have
$S_\ome\leq Z_1\leq \sqrt{\na +\nb}$. Let $c$ be some positive constant
such that $R_{\nb +1}S_\na> c$. Then we obtain $S_\na >
\tfrac{c}{C_1}$. We set $C_1= c \sqrt{\na +\nb}^{-1}$, which delivers
$S_\na > \sqrt{\na +\nb}$ and thus $\ome< \na$. Now consider
\begin{equation}\label{eqnzwei}
\frac{U_0(Z_2)}{U_0^t(Z_1)}\asymp \frac{Z_2^\mu S_1\ldots S_\ome}{Z_1^\ome
  R_1\ldots R_\mu} \asymp \frac{Z_2^\mu}{Z_1^\ome}(S_1\ldots
S_\ome)(S_{\na +\nb+1-\mu}\ldots S_{\na +\nb}).
\end{equation}  
We use the relation 
\begin{equation*}
S_1\ldots S_{\na+\nb}\asymp b^{\na+\nb}S_1^\nor\ldots S_{\na+\nb}^\nor \asymp b^{\na +\nb}.
\end{equation*}
Hence, if $\ome\leq \na +\nb-\mu$ we can bound the right hand side of equation (\ref{eqnzwei}) by
\begin{equation*}
\ll \frac{Z_2^\mu a^{\nb-\na}}{Z_1^\ome S_{\ome+1}\ldots S_{\na +\nb-\mu}}\ll
\frac{Z_2^\nb a^{\nb-\na}}{Z_1^{\na +\nb-\mu}}\ll \frac{Z_2^\nb}{Z_1^\na}a^{\nb-\na},
\end{equation*}
since $\mu >\nb$ and $Z_1\ll 1$. If $\ome > \na+\nb-\mu$, then we obtain in a similar
way the bound
\begin{align*}
\frac{U_0(Z_2)}{U_0^t(Z_1)}&\ll \frac{Z_2^\mu}{Z_1^\ome} S_{\na +\nb +1-\mu}\ldots
S_\ome a^{\nb-\na}\\ &\ll \frac{Z_2^\nb}{Z_1^\na}Z_1^{\na-\ome}S_{\na+\nb +1-\mu}\ldots
S_\ome a^{\nb-\na}\ll \frac{Z_2^\nb}{Z_1^\na}a^{\nb-\na},
\end{align*}
using $S_\ome\leq Z_1\ll 1$ and $Z_1\ll 1$.\par
If $Z_1<R_1$ or $Z_1<S_1$ the same computations as above show the inequality
which we want to prove, using the observation $U_0(Z_1)=1$ or $U_0^t(Z_1)=1$
in these cases.
\end{proof}

\section{A form of Weyl's inequality}
First we introduce the counting function $M_2(\bfalp;P_1;P_2;P^{-1})$ to be
the number of integer vectors $\bfxtil \in (-P_1,P_1)^{d_1\na}$ and $\bfyhat \in
(-P_2,P_2)^{(d_2-1)\nb}$ such that 
\begin{equation*}
\Vert \Gam(\bfxtil;\bfyhat,\bfe_l)\Vert <P^{-1},
\end{equation*}
for $1\leq l\leq \nb$. Here $P$ is some large real number to be specified later. We need this function for our bounds of
$M_1(\bfalp;P_1;P_2;P^{-1})$, which we introduced in the last section. We
start in writing
\begin{equation*}
M_1(\bfalp;P_1;P_2;P_1^{-1})=\sum_{\bfxhat \in
  (-P_1,P_1)^{(d_1-1)\na}}\sum_{\bfyhat \in (-P_2,P_2)^{(d_2-1)\nb}}
M_{\bfxhat,\bfyhat}(P_2,P_1^{-1}),
\end{equation*}
where $M_{\bfxhat,\bfyhat}(P_2,P_1^{-1})$ is the number of integer vectors
$\bfy^{(d_2)}\in (-P_2,P_2)^{\nb}$ such that
\begin{equation*}
\Vert \Gam(\bfxhat,\bfe_l;\bfyhat,\bfy^{(d_2)})\Vert <P_1^{-1},
\end{equation*}
for $1\leq l\leq \na$. We apply Lemma \ref{lem5.1} to the linear forms
$\Gam(\bfxhat,\bfe_l;\bfyhat,\bfy^{(d_2)})$ in the variables
$\bfy^{(d_2)}$. Let $0< \tet_2\leq 1$ be fixed. We
choose the parameters $Z_1,Z_2$ and $a$ such that
\begin{align*}
P_2&=aZ_2 \quad \quad P_2^{\tet_2}=aZ_1\\
P_1^{-1}&=a^{-1}Z_2.
\end{align*}
This gives $a^{-1}Z_1=P_1^{-1}P_2^{-1+\tet_2}$. Furthermore note that $Z_2\leq
1$ since we have $P_2\leq P_1$.\par
Recall that Lemma \ref{lem5.1} gives a bound of the form
\begin{equation*}
U(Z_2) \ll \max \left( \left(\frac{aZ_2}{aZ_1}\right)^\nb
  U(Z_1),\frac{(aZ_2)^\nb}{(aZ_1)^\na} U^t(Z_1)\right).
\end{equation*}
Hence, we have
\begin{align*}
M_{\bfxhat,\bfyhat}(P_2,P_1^{-1})\ll \max
( &P_2^{\nb(1-\tet_2)} M_{\bfxhat,\bfyhat}(P_2^{\tet_2},P_1^{-1}P_2^{-1+\tet_2}), \\ &P_2^{\nb -\na \tet_2} M^t_{\bfxhat,\bfyhat}(P_2^{\tet_2},P_1^{-1}P_2^{-1+\tet_2})),
\end{align*}
where $M^t_{\bfxhat,\bfyhat}$ counts the solutions of the corresponding
transposed linear system as in section 5. For this we write
\begin{equation*}
\Gam(\bfxhat,\bfe_l;\bfyhat,\bfy^{(d_2)})=\sum_{m=1}^\nb \lam_{lm} y_m^{(d_2)},
\end{equation*}
with 
\begin{equation*}
\lam_{lm}= \Gam(\bfxhat,\bfe_l;\bfyhat,\bfe_m).
\end{equation*}
Still with the notation from section 5 we have
\begin{equation*}
L_m^t(\bfy^{(d_2)})=\sum_{l=1}^\nb\lam_{lm}y^{(d_2)}_l =
\Gam(\bfxhat,\bfy^{(d_2)};\bfyhat,\bfe_m).
\end{equation*}
Therefore, we see that
$M^t_{\bfxhat,\bfyhat}(P_2^{\tet_2},P_1^{-1}P_2^{-1+\tet_2})$ counts the
number of integer vectors $\bfz \in (-P_2^{\tet_2},P_2^{\tet_2})^\na$
with 
\begin{equation*}
\Vert \Gam(\bfxhat,\bfz ;\bfyhat,\bfe_m)\Vert <P_1^{-1}P_2^{-1+\tet_2},
\end{equation*}
for $1\leq m\leq \nb$. Taking the sum over all the contributions of admissible
$\bfxhat$ and $\bfyhat$ we obtain
\begin{equation*}
M_1(\bfalp;P_1;P_2;P_1^{-1})\ll S_1P_2^{\nb(1-\tet_2)}+S_2 P_2^{\nb - \na \tet_2}.
\end{equation*}
Here $S_1$ counts all integer vectors $\bfxhat \in (-P_1,P_1)^{(d_1-1)\na}$ and
$\bfyhat \in (-P_2,P_2)^{(d_2-1)\nb}$ and $\bfz\in
(-P_2^{\tet_2},P_2^{\tet_2})^\nb$ with 
\begin{equation*}
\Vert \Gam(\bfxhat,\bfe_l;\bfyhat,\bfz)\Vert <P_1^{-1}P_2^{-1+\tet_2},
\end{equation*}
for $1\leq l\leq \na$, and $S_2$ is the number of $\bfxhat$ and $\bfyhat$ in the same region and $\bfz\in (-P_2^{\tet_2},P_2^{\tet_2})^\na$ such that
\begin{equation*}
\Vert \Gam(\bfxhat,\bfz;\bfyhat,\bfe_l)\Vert <
P_1^{-1}P_2^{-1+\tet_2},
\end{equation*}
for $1\leq l\leq \nb$.\par
Next we define $\tet_1$ by the relation $P_1^{\tet_1}=P_2^{\tet_2}$ and note
that we have $0 <\tet_1\leq 1$ by the assumption on $P_1$ and $P_2$. For
convenience we write $P_1^{\tet_1}=P^{\tet}$ for some real number $\tet$ and
some $P\geq 2$. Now we iterate the above procedure with repect to all the
vectors from $\bfxhat$ and $\bfyhat$. This delivers the bound
\begin{align*}
M_1(\bfalp;&P_1;P_2;P_1^{-1})\ll P_1^{\na(d_1-1)}P_2^{\nb d_2}P^{-\tet (\na d_1+\nb d_2)}
\\
&\times (P^{\na \tet}M_1(\bfalp;P^\tet;P^\tet;P_1^{-d_1}P_2^{-d_2}P^{\tet (\dtil
  +1)})+P^{\nb \tet}M_2(\bfalp;P^\tet;P^\tet;P_1^{-d_1}P_2^{-d_2}P^{\tet (\dtil +1)})).
\end{align*}
In combination with Lemma \ref{lem4.1} we obtain the following result.

\begin{lemma}\label{lem6.1}
Under the above assumptions one has either the upper bound
\begin{equation*}
|S(\bfalp)|< P_1^{\na+\eps}P_2^\nb P^{-\kap},
\end{equation*}
or the lower bound
\begin{equation*}
M_i(\bfalp;P^\tet;P^\tet;P_1^{-d_1}P_2^{-d_2}P^{\tet (\dtil +1)}) \gg P^{\tet (\na d_1+\nb d_2)-\tet n_i} P^{-2^{\dtil}\kap},
\end{equation*}
for $i=1$ or $i=2$. 
\end{lemma}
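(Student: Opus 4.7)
My plan is to combine the dichotomy from Lemma \ref{lem4.1} with the iterated geometry-of-numbers estimate derived immediately above the lemma. Lemma \ref{lem4.1} already furnishes either the desired upper bound on $|S(\bfalp)|$, in which case we are done, or the lower bound
\begin{equation*}
M_1(\bfalp;P_1;P_2;P_1^{-1}) \gg P_1^{\na(d_1-1)} P_2^{\nb d_2} P^{-2^{\dtil}\kap}.
\end{equation*}
So the entire work of the proof lies in the second alternative: converting a lower bound on the ``unbalanced'' counting function $M_1(\bfalp;P_1;P_2;P_1^{-1})$ into a lower bound on one of the ``balanced'' counting functions $M_i(\bfalp;P^\tet;P^\tet;P_1^{-d_1}P_2^{-d_2}P^{\tet(\dtil+1)})$.

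For that conversion, I would invoke the iterated bound displayed just before the statement of the lemma, namely
\begin{align*}
M_1(\bfalp;P_1;P_2;P_1^{-1}) &\ll P_1^{\na(d_1-1)} P_2^{\nb d_2} P^{-\tet(\na d_1 + \nb d_2)} \\
&\quad\times \bigl( P^{\na\tet} M_1(\bfalp;P^\tet;P^\tet;P_1^{-d_1}P_2^{-d_2}P^{\tet(\dtil+1)}) \\
&\qquad + P^{\nb\tet} M_2(\bfalp;P^\tet;P^\tet;P_1^{-d_1}P_2^{-d_2}P^{\tet(\dtil+1)}) \bigr).
\end{align*}
This bound is obtained by iterating the dichotomy of Lemma \ref{lem5.1} with respect to each of the $d_1-1$ blocks of $\bfxhat$-variables and each of the $d_2-1$ blocks of $\bfyhat$-variables (together with the outermost step treating $\bfy^{(d_2)}$); at every step one either swaps the roles of a block via the transposed system $L_m^t$ (picking up a factor $P^{\na\tet}$ or $P^{\nb\tet}$) or keeps the original system, and the choices that lead to the final output $M_2$ versus $M_1$ are precisely encoded by the two terms on the right.

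Now I would simply chain the two estimates. Dividing the iterated bound by $P_1^{\na(d_1-1)}P_2^{\nb d_2}$ and inserting the lower bound on $M_1(\bfalp;P_1;P_2;P_1^{-1})$, the factor $P_1^{\na(d_1-1)}P_2^{\nb d_2}$ cancels on both sides and one obtains
\begin{equation*}
P^{\tet(\na d_1 + \nb d_2) - 2^{\dtil}\kap} \ll P^{\na\tet} M_1(\bfalp;P^\tet;P^\tet;\cdot) + P^{\nb\tet} M_2(\bfalp;P^\tet;P^\tet;\cdot),
\end{equation*}
where the third argument is $P_1^{-d_1}P_2^{-d_2}P^{\tet(\dtil+1)}$. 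By the pigeonhole principle, at least one of the two summands dominates, yielding
\begin{equation*}
M_i(\bfalp;P^\tet;P^\tet;P_1^{-d_1}P_2^{-d_2}P^{\tet(\dtil+1)}) \gg P^{\tet(\na d_1 + \nb d_2) - \tet n_i} P^{-2^{\dtil}\kap}
\end{equation*}
for $i=1$ or $i=2$ (with the convention $n_1=\na$, $n_2=\nb$), which is exactly the claimed lower bound.

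The main (modest) obstacle is conceptual rather than computational: keeping the bookkeeping straight while iterating Lemma \ref{lem5.1}, since each application introduces a choice between the original and the transposed linear system, and one must verify that every intermediate counting function really is of the form $M_{\bfxhat,\bfyhat}$ that Lemma \ref{lem5.1} can act on. Once the iterated inequality is in place, the remainder is a one-line comparison with Lemma \ref{lem4.1}.
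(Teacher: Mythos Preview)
Your proposal is correct and follows exactly the approach the paper takes: the paper establishes the iterated geometry-of-numbers bound in the text immediately preceding the lemma and then simply writes ``In combination with Lemma \ref{lem4.1} we obtain the following result,'' which is precisely the chaining-plus-pigeonhole argument you spell out. Your write-up just makes explicit the one-line comparison that the paper leaves to the reader.
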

Next we proceed similarly as in Birch's work \cite{Bir1961}. We write
\begin{equation*}
\Gam (\bfxtil;\bfytil)=\sum_{i=1}^R\alp_i \Gam_i(\bfxtil,\bfytil),
\end{equation*}
with
\begin{equation*}
\Gam_i(\bfxtil;\bfytil)= d_1!d_2!\sum_\bfj\sum_\bfk F_{\bfj,\bfk}^{(i)}
x_{j_1}^{(1)}\ldots x_{j_{d_1}}^{(d_1)} y_{k_1}^{(1)}\ldots
y_{k_{d_2}}^{(d_2)}.
\end{equation*} 
Suppose that we have some integer vectors $\bfxhat \in
(-P^\tet,P^\tet)^{\na(d_1-1)}$ and $\bfytil \in (-P^\tet,P^\tet)^{\nb d_2}$ counted by $M_1(\bfalp;P^\tet;P^\tet;P_1^{-d_1}P_2^{-d_2}P^{\tet (\dtil
  +1)})$ such
that the matrix
\begin{equation*}
(\Gam_i(\bfxhat,\bfe_l;\bfytil))_{\substack{1\leq i\leq R\\ 1\leq l\leq \na}}
\end{equation*}
has full rank. Without loss of generality we may assume that the leading
$R\times R$ minor has full rank. Our next goal is to show that in this case
the $\alp_i$ are well approximated by rational numbers. For this we write
\begin{equation*}
\Gam(\bfxhat,\bfe_l;\bfytil)=\tilde{a}_l+\deltil_l,
\end{equation*}
for $1\leq l\leq \na$, with some integers $\tilde{a}_l$ and real $\deltil_l$ with
$|\deltil_l|<P_1^{-d_1}P_2^{-d_2}P^{\tet (\dtil+1)}$. Next let $q$ be the
absolute value of the determinant of the matrix
$(\Gam_i(\bfxhat,\bfe_l;\bfytil))_{1\leq i,l\leq R}$, and note that we have
\begin{equation*}
q\ll P^{R\tet (\dtil +1)}.
\end{equation*}
Using the formula for the adjoint matrix of our matrix under consideration we
obtain
\begin{equation*}
\alp_i = q^{-1} (a_i +\del_i),
\end{equation*}
for $1\leq i\leq R$ with some integers $a_i$ and with 
\begin{equation*}
|\del_i| \ll P^{(R-1)\tet (\dtil +1)}\max_l|\deltil_l|.
\end{equation*}
Thus, we obtain the approximation 
\begin{equation*}
|q \alp_i-a_i| \ll P_1^{-d_1}P_2^{-d_2}P^{R\tet (\dtil +1)},
\end{equation*}
for $1\leq i\leq R$.\par
We have now established the following
lemma.

\begin{lemma}\label{lem6.2}
There is some positive constant $C$ such that the following holds. Let $P_2\leq P_1$ and $P$ some real number larger than $2$. Let $0<\tet_2\leq 1$ and write $P_2^{\tet_2}=P^\tet$. Then at least one of the following alternatives
hold.\par
i) One has the upper bound $|S(\bfalp)|<P_1^{\na +\eps}P_2^\nb P^{-\kap}$.\par
ii) There exist integers $1\leq q \leq P^{R(\dtil +1)\tet}$ and $a_1,\ldots,
a_R$ with 
\begin{equation*}
\gcd (q,a_1,\ldots, a_R)=1,
\end{equation*}
and 
\begin{equation*}
2|q\alp_i - a_i| \leq P_1^{-d_1}P_2^{-d_2}P^{R(\dtil +1)\tet},
\end{equation*}
for $1\leq i\leq R$.\par
iii) The number of integer vectors $\bfxhat \in (-P^\tet,P^\tet)^{\na(d_1-1)}$
and $\bfytil \in (-P^\tet,P^\tet)^{\nb d_2}$ such that 
\begin{equation}\label{eqn6.1}
\rank (\Gam_i(\bfxhat,\bfe_l;\bfytil))<R
\end{equation}
is bounded below by 
\begin{equation*}
\geq C (P^\tet)^{\na (d_1-1)+\nb d_2- 2^\dtil\kap/\tet }.
\end{equation*}\par
iv) The number of integer vectors $\bfxtil \in (-P^\tet,P^\tet)^{\na d_1}$
and $\bfyhat \in (-P^\tet,P^\tet)^{\nb (d_2-1)}$ such that 
\begin{equation}\label{eqn6.2}
\rank (\Gam_i(\bfxtil;\bfyhat,\bfe_l))<R
\end{equation}
is bounded below by 
\begin{equation*}
\geq C (P^\tet)^{ \na d_1 +\nb (d_2-1) - 2^\dtil\kap/\tet}.
\end{equation*}
\end{lemma}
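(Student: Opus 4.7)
The plan is to combine Lemma \ref{lem6.1} with the rank/determinant dichotomy outlined in the paragraphs immediately preceding the statement. First I would apply Lemma \ref{lem6.1}: if the upper bound $|S(\bfalp)| < P_1^{\na+\eps} P_2^{\nb} P^{-\kap}$ holds, we are already in alternative (i). Otherwise there exists $i \in \{1, 2\}$ with the non-trivial lower bound on $M_i(\bfalp; P^\tet; P^\tet; P_1^{-d_1} P_2^{-d_2} P^{\tet(\dtil +1)})$ supplied by that lemma.

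Consider first the case $i = 1$, so that many integer pairs $(\bfxhat, \bfytil)$ in the relevant box satisfy $\|\Gam(\bfxhat, \bfe_l; \bfytil)\| < P_1^{-d_1} P_2^{-d_2} P^{\tet(\dtil+1)}$ for $1 \leq l \leq \na$. For each such pair, the $R \times \na$ matrix $(\Gam_i(\bfxhat, \bfe_l; \bfytil))_{i,l}$ is either of rank strictly less than $R$, or of full rank $R$. If the rank is $<R$ for \emph{every} counted pair, then alternative (iii) follows immediately from the $M_1$ lower bound once the exponent $\tet(\na d_1 + \nb d_2) - \tet \na - 2^\dtil\kap$ is rewritten as $\tet[\na(d_1-1) + \nb d_2 - 2^\dtil\kap/\tet]$. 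Otherwise some pair produces a rank-$R$ matrix; after permuting indices we may assume the leading $R \times R$ block is invertible, and I would then carry out the Cramer's-rule argument sketched in the excerpt: write $\Gam(\bfxhat, \bfe_l; \bfytil) = \tilde a_l + \deltil_l$ with $\tilde a_l \in \Z$ and $|\deltil_l| < P_1^{-d_1} P_2^{-d_2} P^{\tet(\dtil+1)}$, take $q$ to be the absolute value of the leading $R \times R$ determinant, and invert to obtain $\alp_i = q^{-1}(a_i + \del_i)$. The size of the matrix entries gives $q \ll P^{R\tet(\dtil+1)}$, while the cofactor bound yields $|\del_i| \ll P^{(R-1)\tet(\dtil+1)} \max_l |\deltil_l|$, hence $|q \alp_i - a_i| \ll P_1^{-d_1} P_2^{-d_2} P^{R\tet(\dtil+1)}$. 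Dividing through by $\gcd(q, a_1, \ldots, a_R)$ only improves these bounds and delivers alternative (ii).

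The case $i = 2$ runs symmetrically, with the rôles of $\bfx$ and $\bfy$ interchanged: the relevant matrix is now $(\Gam_i(\bfxtil; \bfyhat, \bfe_l))_{i,l}$ of size $R \times \nb$, and the dichotomy splits into alternative (iv) (rank $<R$ for every counted pair) or, in the full-rank case, alternative (ii) via the same Cramer's-rule computation applied to the transposed configuration. The main obstacle is bookkeeping: one must verify that in both the $M_1$ and $M_2$ branches the determinant and cofactor estimates produce exactly the exponents $P^{R(\dtil+1)\tet}$ and $P_1^{-d_1} P_2^{-d_2} P^{R(\dtil+1)\tet}$ stated in (ii), independently of which branch one started from, and the factor of $2$ in (ii) is absorbed by choosing $C$ small enough so that the $a_i$ can be taken to be the nearest integers to $q\alp_i$.
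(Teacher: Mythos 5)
Your proposal is correct and follows essentially the same route the paper takes: invoke Lemma \ref{lem6.1} to obtain either alternative (i) or a lower bound on $M_1$ or $M_2$; in the $M_1$ case split on whether some counted pair $(\bfxhat,\bfytil)$ yields a full-rank matrix (Cramer's rule then gives (ii) exactly as in the paragraph preceding the statement) or whether every counted pair has rank $<R$ (so the $M_1$ lower bound, after rewriting $\tet(\na d_1 + \nb d_2) - \tet\na - 2^\dtil\kap$ as $\tet[\na(d_1-1)+\nb d_2 - 2^\dtil\kap/\tet]$, is exactly (iii)); and symmetrically for $M_2$ to obtain (ii) or (iv). One small slip: the constant $C$ appears only in (iii) and (iv), so it does not help absorb the factor $2$ in (ii) --- that factor is accounted for simply by adjusting the implicit constants in the Cramer's-rule bound (equivalently by a harmless renaming of $\kappa$), but this does not affect the correctness of the argument.
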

We note that the constant $C$ is independent of $\tet_2$.\par
Assume that alternative iii) of the above lemma holds. Let $\calL_1$ be the
affine variety defined by equation (\ref{eqn6.1}) in affine $\na (d_1-1)+\nb d_2$-space. As in Birch's work \cite{Bir1961}, section 3, the condition iii)
implies the lower bound
\begin{equation*}
\dim \calL_1 \geq \na (d_1-1)+\nb d_2  - 2^\dtil \kap/\tet .
\end{equation*}
Recall that the affine variety $V_1^*$ (see equation (\ref{eqn6.3}) in $\A_\C^{\na +\nb}$ is given
by 
\begin{equation*}
\rank \left( \frac{\partial F_i}{\partial x_j}\right)_{\substack{ 1\leq i\leq
    R\\ 1\leq j\leq \na}} <R.
\end{equation*}
Furthermore, let $\calD$ be the linear subspace given by
\begin{equation*}
\bfx^{(1)}=\ldots = \bfx^{(d_1-1)} \mbox{ and } \bfy^{(1)}=\ldots =
\bfy^{(d_2)},
\end{equation*}
in affine $\na (d_1-1)+\nb d_2$-space. Considering these as varieties over the
algebraically closed field $\C$ one has
\begin{equation*}
\dim \calL_1\cap \calD \geq \dim \calL_1 - \nb (d_2-1)-\na (d_1-2).
\end{equation*}
Since $\calL_1\cap \calD$ projects onto $V_1^*$, condition iii) above implies
\begin{equation*}
\dim V_1^* \geq \na +\nb - 2^\dtil \kap /\tet .
\end{equation*}
Similarly, we note that condition iv) of Lemma \ref{lem6.2} implies
\begin{equation*}
\dim V_2^* \geq \na +\nb - 2^\dtil \kap /\tet .
\end{equation*}
Define $K$ by
\begin{equation*}
2^{\dtil} K= \min \{ \na +\nb-\dim V_1^*, \na +\nb-\dim V_2^*\}.
\end{equation*}
Furthermore we set $P=P_1^{d_1}P_2^{d_2}$ for the rest of this paper. Note
that this gives the relations
\begin{equation*}
\tet = (b d_1+d_2)^{-1}\tet_2,
\end{equation*}
and
\begin{equation*}
\tet_1=b^{-1}\tet_2.
\end{equation*}
Next we define $\grM (\tet)$ to be the set of $\bfalp \in [0,1]^R$ such that
$\bfalp$ satisfies condition ii) of Lemma \ref{lem6.2}. With this notation we
can state our final lemma of this section, which is a direct consequence of
Lemma \ref{lem6.2}.

\begin{lemma}\label{lem6.3}
Let $0 <\tet \leq (bd_1+d_2)^{-1}$ and assume $\eps >0$. Then one has
for some
real vector $\bfalp \in \R^R$ either $\bfalp \in \grM(\tet)$ modulo $1$ or the
upper bound
\begin{equation*}
|S(\bfalp)| \ll P_1^{n_1}P_2^{n_2} P^{-K\tet +\eps}.
\end{equation*}
\end{lemma}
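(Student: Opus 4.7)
The plan is to invoke Lemma \ref{lem6.2} with a carefully chosen value of its free parameter $\kap$. Fix $\bfalp \in \R^R$ and suppose $\bfalp \notin \grM(\tet)$ modulo $1$, so that alternative (ii) of Lemma \ref{lem6.2} fails; the goal is to show that alternative (i) can be forced to hold with $\kap$ just below $K\tet$.

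The central observation is that alternatives (iii) and (iv) each force $\kap \geq K\tet$. Indeed, as worked out in the paragraphs immediately preceding the statement of Lemma \ref{lem6.3}, alternative (iii) yields $\dim \calL_1 \geq \na(d_1-1) + \nb d_2 - 2^\dtil \kap/\tet$ via the standard counting-in-a-variety argument of Birch \cite{Bir1961}; intersecting $\calL_1$ with the diagonal subspace $\calD$ and projecting onto $V_1^*$ then gives $\dim V_1^* \geq \na + \nb - 2^\dtil \kap/\tet$, and the definition of $K$ forces $\kap \geq K\tet$. The same argument, with $V_2^*$ in place of $V_1^*$, handles (iv).

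Assuming $K > 0$ (the case $K = 0$ being trivial from the bound $|S(\bfalp)| \ll P_1^{\na}P_2^{\nb}$), I now apply Lemma \ref{lem6.2} with $\kap = K\tet - \delta$ for a small $\delta > 0$ to be fixed below. Alternatives (iii) and (iv) are then precluded by the previous paragraph, (ii) is excluded by hypothesis, and so (i) must hold, delivering $|S(\bfalp)| < P_1^{\na + \eps'}P_2^{\nb}P^{-K\tet + \delta}$ for any $\eps' > 0$. Since $b \geq 1$ gives $P_2 \leq P_1$, we have $P_1^{d_1} \leq P = P_1^{d_1}P_2^{d_2}$ and hence $P_1^{\eps'} \leq P^{\eps'/d_1}$. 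Taking $\delta = \eps/2$ and $\eps' = d_1\eps/2$ absorbs these losses into $P^{\eps}$, producing the required bound.

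The main delicate point is the use of $\kap$ as a free positive parameter in Lemma \ref{lem6.2}. This is justified by tracing the dichotomy back through Lemmas \ref{lem6.1} and \ref{lem4.1}: at no stage is a particular value of $\kap$ fixed, and for each $\kap > 0$ one of the four alternatives holds. Granted this, the proof reduces to the bookkeeping above, which pits the exponent $K\tet$ arising from the dimension bounds on $V_1^*$ and $V_2^*$ against the exponent $\kap$ appearing in the Weyl-type bound (i).
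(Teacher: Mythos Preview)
Your argument is correct and matches the paper's approach exactly. The paper in fact gives no separate proof of Lemma~\ref{lem6.3} at all, declaring it ``a direct consequence of Lemma~\ref{lem6.2}'' after the paragraphs establishing that alternatives (iii) and (iv) force $\dim V_i^* \ge \na+\nb - 2^{\dtil}\kap/\tet$; your write-up simply makes explicit the choice $\kap = K\tet - \del$ and the absorption of the $P_1^{\eps'}$ factor into $P^{\eps}$, which the paper leaves to the reader.
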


\section{Circle method}

In this section we set up the circle method to get an asymptotic formula for
$N(P_1,P_2)$ mainly following Birch's work \cite{Bir1961}. We note that by orthogonality we have
\begin{equation}\label{eqn7.1}
N(P_1,P_2)= \int_{[0,1]^R} S(\bfalp)\d\bfalp.
\end{equation}
In the following we assume that we have
\begin{equation}\label{eqn7.2}
K> \max\{R(R+1) (\dtil +1),R(bd_1+d_2)\}.
\end{equation}
Next we choose positive and real $\del$ and $\vartet_0$ in such a way that the
following conditions are satisfied
\begin{equation}\label{eqn7.3}
K- R(R+1)(\dtil+1) > 2 \del \vartet_0^{-1},
\end{equation}
\begin{equation}\label{eqn7.4}
K> (2\del +R) (bd_1+d_2),
\end{equation}
and
\begin{equation}\label{eqn7.5}
1 > (bd_1+d_2) R(\dtil +1) \vartet_0 (2R+3) +\del (bd_1+d_2).
\end{equation}
Note that the parameters $\del$ and $\vartet_0$ may depend on $b$. Now we use
the results of the last section to show that the contribution of those
$\bfalp$ which are not in $\grM (\vartet_0)$ is neglegible in equation
(\ref{eqn7.1}). This is done in the following lemma.

\begin{lemma}\label{lem7.1}
One has
\begin{equation*}
\int_{\bfalp\notin \grM(\vartet_0)} |S(\bfalp)|\d\bfalp =
O(P_1^{n_1}P_2^{n_2} P^{-R-\del}).
\end{equation*}
\end{lemma}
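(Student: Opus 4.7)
The plan is to decompose the minor arcs $[0,1]^R \setminus \grM(\vartet_0)$ into a bounded number of annular regions determined by the nested family $\{\grM(\vartet)\}_{\vartet}$, bounding $|S|$ on each region via the Weyl-type Lemma \ref{lem6.3} and each region's Lebesgue measure directly from the definition of $\grM(\cdot)$.

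First I would record that for any $0 < \vartet \le (bd_1+d_2)^{-1}$ one has
\[
\meas(\grM(\vartet)\cap [0,1]^R) \ll P^{R(R+1)(\dtil+1)\vartet - R}.
\]
Indeed, $\grM(\vartet)$ is a union of boxes of the form $\{\bfalp : |q\alpha_i - a_i| \le P^{R(\dtil+1)\vartet - 1}/2\}$ (using $P = P_1^{d_1}P_2^{d_2}$), each of $R$-dimensional measure $(P^{R(\dtil+1)\vartet - 1}/q)^R$, and for each $1\le q \le Q := P^{R(\dtil+1)\vartet}$ there are $\ll q^R$ admissible tuples $\bfa$. Summing gives the bound.

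Now set $\vartet^* := (bd_1+d_2)^{-1}$ (the largest value permitted in Lemma \ref{lem6.3}), fix a small $\eta > 0$ (to be determined), and let $V_j := \min(\vartet_0 + j\eta,\, \vartet^*)$ for $j \ge 0$, so that $V_N = \vartet^*$ for some $N = O(1)$. Since $\grM(\cdot)$ is monotone increasing, we get the covering
\[
[0,1]^R \setminus \grM(\vartet_0) \;\subseteq\; \bigcup_{j=0}^{N-1}\bigl(\grM(V_{j+1}) \setminus \grM(V_j)\bigr)\,\cup\, \bigl([0,1]^R \setminus \grM(V_N)\bigr).
\]
On each annulus $\grM(V_{j+1}) \setminus \grM(V_j)$ one has $\bfalp \notin \grM(V_j)$, so alternative (ii) of Lemma \ref{lem6.3} at $\tet = V_j \le \vartet^*$ must fail, yielding $|S(\bfalp)| \ll P_1^{n_1}P_2^{n_2}P^{-KV_j + \eps}$. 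Combining this with the measure bound for $\grM(V_{j+1})$, the contribution of this annulus is
\[
\ll P_1^{n_1}P_2^{n_2}\, P^{-(K-R(R+1)(\dtil+1))V_j\, +\, R(R+1)(\dtil+1)\eta \,-\, R + \eps}.
\]
Since $V_j \ge \vartet_0$ and condition (\ref{eqn7.3}) gives $(K - R(R+1)(\dtil+1))\vartet_0 > 2\del$, the choices $\eta < \del/(2R(R+1)(\dtil+1))$ and $\eps < \del/2$ force the exponent to be $\le -R - \del$, so each annulus contributes $\ll P_1^{n_1}P_2^{n_2}P^{-R-\del}$. Since $N = O(1)$, summing preserves this bound.

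For the remaining piece $[0,1]^R \setminus \grM(V_N)$, Lemma \ref{lem6.3} at $\tet = \vartet^*$ gives $|S(\bfalp)| \ll P_1^{n_1}P_2^{n_2}P^{-K\vartet^* + \eps}$, and condition (\ref{eqn7.4}) ensures $K\vartet^* > R + 2\del$; integrating against the trivial measure bound $\le 1$ again yields $\ll P_1^{n_1}P_2^{n_2}P^{-R-\del}$. The delicate point is the parameter balancing: because (\ref{eqn7.5}) forces $\vartet_0$ to be small, a single application of Lemma \ref{lem6.3} at $\tet = \vartet_0$ does not suffice, and it is precisely this refinement into short arithmetic-progression steps $V_j$ that allows conditions (\ref{eqn7.3}) and (\ref{eqn7.4}) to cooperate and deliver the required savings.
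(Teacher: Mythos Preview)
Your argument is correct and follows essentially the same route as the paper: an annular decomposition $[0,1]^R\setminus\grM(\vartet_0)\subseteq\bigcup_j(\grM(\vartet_{j+1})\setminus\grM(\vartet_j))\cup([0,1]^R\setminus\grM(\vartet_T))$, with Lemma~\ref{lem6.3} bounding $|S|$ on each annulus and the direct measure estimate $\meas(\grM(\vartet))\ll P^{R(R+1)(\dtil+1)\vartet-R}$, the tail handled by condition~(\ref{eqn7.4}) and the annuli by condition~(\ref{eqn7.3}) together with the step-size constraint. The only cosmetic difference is that the paper records the cruder bound $T\ll P^{\del/2}$ on the number of steps (gaining $P^{-R-3\del/2}$ per annulus to absorb it), whereas you observe more sharply that $N=O(1)$, since the step size $\eta$ and the total width $\vartet^*-\vartet_0$ are both constants independent of $P$.
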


\begin{proof}
We choose a sequence of $\vartet_i$ with 
\begin{equation*}
\vartet_T > \vartet_{T-1} > \ldots > \vartet_1 >\vartet_0 >0,
\end{equation*}
and 
\begin{equation*}
\vartet_T \leq (bd_1+d_2)^{-1} \quad \mbox{ and } \quad \vartet_T K > 2\del
+R.
\end{equation*}
Note that this is possible by equation (\ref{eqn7.4}). Furthermore we choose our $\vartet_i$ in such a way that they
satisfy
\begin{equation*}
\frac{1}{2}\del > R(R+1) (\dtil +1) (\vartet_{t+1} - \vartet_t),
\end{equation*}
for $0\leq t < T$. We certainly can achieve this with $T\ll P^{\del/2}$.\par
Now we consider the contribution of those $\bfalp$, which do not belong to
$\grM(\vartet_T)$. By Lemma \ref{lem6.3} we have
\begin{align*}
\int_{\bfalp\notin \grM(\vartet_T)} |S(\bfalp)|\d\bfalp &\ll P_1^{n_1}P_2^{n_2}
P^{-K\vartet_T +\eps} \\
& \ll P_1^{n_1}P_2^{n_2} P^{-R-\del}.
\end{align*}
For some $\tet >0$ we can estimate the measure of $\grM(\tet)$ by
\begin{align*}
\meas (\grM(\tet)) &\ll \sum_{q\leq P^{R(\dtil +1) \tet}}\sum_\bfa q^{-R}
P_1^{-d_1R} P_2^{-d_2 R} P^{R^2 (\dtil +1)\tet}\\ &\ll  P^{-R+R(R+1) (\dtil
  +1) \tet}.
\end{align*}
This estimate together with Lemma \ref{lem6.3} delivers the bound
\begin{equation*}
  \int_{\bfalp \in \grM(\vartet_{t+1})\setminus \grM(\vartet_{t})}
  |S(\bfalp)|\d\bfalp \ll P_1^{n_1} P_2^{n_2} P^{-K \vartet_t+\eps - R +R
    (R+1)(\dtil +1) \vartet_{t+1}}.
\end{equation*}
Since we have the inequality
\begin{equation*}
-K\vartet_t +R(R+1) (\dtil +1) \vartet_{t+1} \leq \frac{1}{2}\del +\vartet_t
(-K + R(R+1) (\dtil +1)) \leq \frac{1}{2} \del - 2 \del,
\end{equation*}
we finally obtain the estimate
\begin{equation*}
\int_{\bfalp \in \grM(\vartet_{t+1})\setminus \grM(\vartet_{t})}
  |S(\bfalp)|\d\bfalp \ll P_1^{n_1}P_2^{n_2} P^{-R-3 \del/2},
\end{equation*}
for $0\leq t < T$, which is enough to prove the lemma.
\end{proof} 

Next we turn towards the contribution of the major arcs. In order to obtain
nicer formulas, we first
define some modified major arcs. For some $q$ and $0\leq a_i <q$ let $\grM_{\bfa,q}'(\tet)$ be the set of
$\bfalp \in [0,1]^R$ such that 
\begin{equation*}
|q\alp_i - a_i| \leq q P^{-1+ R(\dtil +1) \tet},
\end{equation*}
for $1\leq i\leq R$. In the same way as before we set 
\begin{equation*}
\grM'(\tet) = \bigcup_{1\leq q \leq P^{R(\dtil +1) \tet}} \bigcup_\bfa
\grM'_{\bfa,q} (\tet),
\end{equation*}
where the union for the $\bfa$ is over all $0\leq a_i < q$ with $\gcd
(q,a_1,\ldots, a_R)=1$. We note that the $\grM'_{\bfa,q}(\tet)$ are disjoint if
$\tet$ is sufficiently small. If we have in the above union some 
\begin{equation*}
\bfalp \in \grM'_{\bfa,q} (\tet) \cap \grM'_{\tilde{\bfa},\tilde{q}}(\tet),
\end{equation*}
for distinct $\bfa,q$ and $\tilde{\bfa},\tilde{q}$, then there is some $1\leq i\leq R$ such that 
\begin{equation*}
\frac{1}{ q\tilde{q} } \leq \left|\frac{a_i}{q}-\frac{\tilde{a}_i}{\tilde{q}}\right|
\leq 2 P^{-1+ R(\dtil +1)\tet}.
\end{equation*}
This is impossible for large $P$ and $\tet < 1/ (3R (\dtil +1))$. By equation
(\ref{eqn7.5}) we see that our major arcs $\grM'(\vartet_0)$ are
disjoint. Thus, we have the following lemma, which is a direct consequence of
Lemma \ref{lem7.1} and equation (\ref{eqn7.1}).

\begin{lemma}\label{lem7.2}
One has
\begin{equation*}
N(P_1,P_2)= \sum_{1\leq q \leq P^{R (\dtil +1) \vartet_0}} \sum_{\bfa}
\int_{\grM'_{\bfa,q} (\vartet_0)} S(\bfalp) \d\bfalp +O(
P_1^{n_1}P_2^{n_2} P^{-R-\del}),
\end{equation*}
where the second sum is over all $0\leq a_i < q$ for $1\leq i\leq R$, such that 
\begin{equation*}
\gcd(q,a_1,\ldots, a_R)=1.
\end{equation*}
\end{lemma}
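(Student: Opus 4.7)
The plan is to take the orthogonality identity (\ref{eqn7.1}) as the starting point, split the unit cube as $[0,1]^R = \grM'(\vartet_0) \,\cup\, ([0,1]^R \setminus \grM'(\vartet_0))$, evaluate the major-arc part by unpacking the definition of $\grM'(\vartet_0)$, and dispose of the complement by invoking Lemma \ref{lem7.1}. The only real work is to compare the two families of major arcs $\grM(\vartet_0)$ and $\grM'(\vartet_0)$.

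The key observation is that $\grM(\vartet_0) \subseteq \grM'(\vartet_0)$ after the natural identification of $[0,1]^R$ with the torus $\R^R/\Z^R$. Indeed, if $\bfalp$ lies in $\grM(\vartet_0)$, then there are integers $1 \leq q \leq P^{R(\dtil+1)\vartet_0}$ and $a_1,\ldots,a_R$ with $\gcd(q,a_1,\ldots,a_R) = 1$ satisfying $2|q\alp_i - a_i| \leq P^{-1+R(\dtil+1)\vartet_0}$. Since $q \geq 1$ this immediately implies $|q\alp_i - a_i| \leq q P^{-1+R(\dtil+1)\vartet_0}$; replacing each $a_i$ by its residue modulo $q$ (and correspondingly translating $\bfalp$ by an integer vector on the torus) produces a representative with $\bfalp \in \grM'_{\bfa,q}(\vartet_0)$. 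It follows that $[0,1]^R \setminus \grM'(\vartet_0) \subseteq [0,1]^R \setminus \grM(\vartet_0)$ up to a boundary null-set, so Lemma \ref{lem7.1} yields
\begin{equation*}
\left| \int_{[0,1]^R \setminus \grM'(\vartet_0)} S(\bfalp) \,d\bfalp \right| \;\leq\; \int_{\bfalp \notin \grM(\vartet_0)} |S(\bfalp)|\,d\bfalp \;=\; O\bigl(P_1^{n_1} P_2^{n_2} P^{-R-\del}\bigr).
\end{equation*}

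On the major arcs, the disjointness of the sets $\grM'_{\bfa,q}(\vartet_0)$ for distinct admissible pairs $(q,\bfa)$ has already been established in the paragraph preceding the lemma, relying on the size condition (\ref{eqn7.5}) on $\vartet_0$. Consequently the major-arc integral decomposes as
\begin{equation*}
\int_{\grM'(\vartet_0)} S(\bfalp)\,d\bfalp = \sum_{1 \leq q \leq P^{R(\dtil+1)\vartet_0}} \sum_{\bfa} \int_{\grM'_{\bfa,q}(\vartet_0)} S(\bfalp)\,d\bfalp,
\end{equation*}
and summing this with the minor-arc bound above gives the claimed formula. The argument is essentially bookkeeping once Lemma \ref{lem7.1} is available; the only mildly subtle point is the inclusion $\grM(\vartet_0) \subseteq \grM'(\vartet_0)$, which is handled by the routine mod-$q$ reduction of the numerators described above.
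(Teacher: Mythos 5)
Your proposal is correct and matches the paper's (implicit) argument: the paper simply records that Lemma~\ref{lem7.2} is a direct consequence of Lemma~\ref{lem7.1} and the orthogonality identity~(\ref{eqn7.1}), together with the disjointness of the $\grM'_{\bfa,q}(\vartet_0)$ established in the preceding paragraph. You have filled in the one tacit step — the inclusion $\grM(\vartet_0)\subseteq\grM'(\vartet_0)$ obtained from $q\geq 1$ and a mod-$q$ normalization of the numerators — which is exactly what justifies the reduction to Lemma~\ref{lem7.1}.
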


Our next goal is to obtain an approximation for $S(\bfalp)$ on the major
arcs. For convenience we write in the following $\eta =
R(\dtil+1)\vartet_0$. Furthermore, for some $\bfalp \in
\grM'_{\bfa,q}(\vartet_0)$ we write $\bfalp = \bfa/q +\bfbet$ with
\begin{equation*}
|\bet_i| \leq P^{-1+\eta},
\end{equation*}
for $1\leq i\leq R$. We introduce the notation 
\begin{equation*}
S_{\bfa,q} = \sum_{\bfx,\bfy} e\left(\sum_{i=1}^R a_i F_i(\bfx,\bfy)/q\right),
\end{equation*}
where $\bfx$ and $\bfy$ run through a complete set of residues modulo $q$.
Let
\begin{equation*}
I(\bfu)= \int_{\calB_1\times \calB_2} e\left(\sum_{i=1}^R u_i F_i(\bfv;\bfw)\right)
\d\bfv\d\bfw,
\end{equation*}
for some real vector $\bfu = (u_1,\ldots , u_R)$. Now we have introduced all
the notation we need to state our next lemma.

\begin{lemma}\label{lem7.3}
Let $\bfalp \in \grM'_{\bfa,q}(\vartet_0)$ and $q\leq P^\eta$. Then one has
\begin{equation*}
S(\bfalp)= P_1^{n_1}P_2^{n_2} q^{-\na -\nb} S_{\bfa,q} I(P\bfbet) + O(P_1^{n_1}P_2^{n_2}
P^{2\eta}P_2^{-1}).
\end{equation*}
\end{lemma}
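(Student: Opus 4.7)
The plan is the classical major arcs approximation, adapted to the two scales $P_1$ and $P_2$. First I would split each coordinate of $\bfx$ and $\bfy$ according to residue and quotient modulo $q$: write $\bfx = q\bfu + \bfr$ and $\bfy = q\bfv + \bfs$, where $\bfr$ and $\bfs$ run through complete sets of residues modulo $q$. Since every $F_i$ has integer coefficients and every monomial of $F_i(q\bfu + \bfr; q\bfv+\bfs)$ that involves at least one factor of $q\bfu + \bfr$ or $q\bfv + \bfs$ beyond the constant term $\bfr,\bfs$ carries a factor of $q$, one has
\begin{equation*}
F_i(q\bfu+\bfr;\, q\bfv+\bfs) \equiv F_i(\bfr;\bfs) \pmod{q},
\end{equation*}
so that $e\bigl(\bfa\cdot\bfF(\bfx;\bfy)/q\bigr) = e\bigl(\bfa\cdot\bfF(\bfr;\bfs)/q\bigr)$. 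Writing $\bfalp = \bfa/q + \bfbet$ then factors
\begin{equation*}
S(\bfalp) = \sum_{\bfr,\bfs \bmod q} e\!\left(\tfrac{\bfa\cdot\bfF(\bfr;\bfs)}{q}\right) \,T(\bfr,\bfs;\bfbet),
\end{equation*}
where $T(\bfr,\bfs;\bfbet)$ is the inner sum over those $\bfu,\bfv$ with $q\bfu+\bfr \in P_1\calB_1$ and $q\bfv+\bfs \in P_2\calB_2$ of $e\bigl(\bfbet\cdot\bfF(q\bfu+\bfr;\, q\bfv+\bfs)\bigr)$.

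Next I would approximate $T(\bfr,\bfs;\bfbet)$ by a Riemann integral. Since the number of lattice points is $\asymp (P_1/q)^{n_1}(P_2/q)^{n_2}$ and the integrand is smooth, a standard sum-to-integral comparison on unit cells in $(\bfu,\bfv)$ gives
\begin{equation*}
T(\bfr,\bfs;\bfbet) = \int e\bigl(\bfbet\cdot\bfF(q\bfu+\bfr;\, q\bfv+\bfs)\bigr)\,\d\bfu\,\d\bfv + E(\bfr,\bfs),
\end{equation*}
where $E(\bfr,\bfs)$ is controlled by the maximal variation of the phase over a single unit cell plus a boundary contribution. On a unit cell the variation of $\bfx$ is $q$ and of $\bfy$ is $q$, while on the box of dilation $(P_1,P_2)$ the gradients of $\bfbet\cdot\bfF$ with respect to $\bfx$ and $\bfy$ are bounded by $|\bfbet|\cdot O(P_1^{d_1-1}P_2^{d_2}) = O(P^{\eta}/P_1)$ and $O(P^{\eta}/P_2)$ respectively, using $|\bet_i|\le P^{-1+\eta}$ and $P = P_1^{d_1}P_2^{d_2}$. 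Hence the variation per cell is $O(qP^{\eta}/P_2)$ (the $\bfy$-direction dominates since $P_2\le P_1$), and the total interior contribution to $E$ summed over $\asymp P_1^{n_1}P_2^{n_2}/q^{n_1+n_2}$ cells is $O\bigl(P_1^{n_1}P_2^{n_2}q^{1-n_1-n_2}P^{\eta}/P_2\bigr)$; the boundary layer contributes the smaller amount $O\bigl(P_1^{n_1}P_2^{n_2}q^{-n_1-n_2}/P_2\bigr)$.

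Then I would change variables in the integral via $\bfx = q\bfu + \bfr$, $\bfy = q\bfv+\bfs$, and then $\bfx = P_1\bfv$, $\bfy = P_2\bfw$, to obtain
\begin{equation*}
\int e\bigl(\bfbet\cdot\bfF(q\bfu+\bfr;\, q\bfv+\bfs)\bigr)\,\d\bfu\,\d\bfv = q^{-n_1-n_2} P_1^{n_1}P_2^{n_2}\, I(P\bfbet),
\end{equation*}
where bihomogeneity of each $F_i$ yields $F_i(P_1\bfv;P_2\bfw) = P_1^{d_1}P_2^{d_2}F_i(\bfv;\bfw) = P\,F_i(\bfv;\bfw)$; crucially, this integral is independent of the residue $(\bfr,\bfs)$. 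Summing over residues produces the main term $P_1^{n_1}P_2^{n_2} q^{-n_1-n_2} S_{\bfa,q} I(P\bfbet)$.

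Finally, the error is bounded by $q^{n_1+n_2}$ (the trivial bound on the number of residue classes) times the per-residue error above, which yields $O(P_1^{n_1}P_2^{n_2} q P^{\eta}/P_2) = O(P_1^{n_1}P_2^{n_2} P^{2\eta}/P_2)$ upon using $q\le P^{\eta}$. The main obstacle is keeping careful track of the asymmetric scales: both derivative bounds and the Jacobians involve different powers of $P_1$ and $P_2$, and it is the $\bfy$-gradient that dictates the error term $P^{2\eta}P_2^{-1}$ appearing in the statement; matching this exactly is the delicate step.
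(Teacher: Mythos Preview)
Your proposal is correct and follows essentially the same approach as the paper: split into residues modulo $q$, replace the inner sum by an integral with error governed by the phase variation over a unit cell plus a boundary contribution, change variables to extract $q^{-n_1-n_2}P_1^{n_1}P_2^{n_2}I(P\bfbet)$, and then sum over residues using $q\le P^\eta$. Your observation that the $\bfy$-gradient dominates because $P_2\le P_1$ is exactly the point the paper uses when it bounds $qP_1^{d_1-1}P_2^{d_2}+qP_1^{d_1}P_2^{d_2-1}\ll qP_1^{d_1}P_2^{d_2-1}$; the only minor slip is that your boundary term is off by a factor of $q$ (it should be $(P_1/q)^{n_1}(P_2/q)^{n_2-1}$ rather than $P_1^{n_1}P_2^{n_2}q^{-n_1-n_2}/P_2$), but this is subdominant and does not affect the final error bound.
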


\begin{proof}
In the sum $S(\bfalp)$ we write $\bfx = \bfz^{(1)}+ q \bfx'$ and $\bfy
= \bfz^{(2)}+ q \bfy'$, with $0\leq z_i^{(1)} < q$ and $0\leq z_i^{(2)} <
q$ for all $1\leq i\leq n$. Then we obtain
\begin{align*}
S(\bfalp)&= \sum_{\bfx\in P_1\calB_1}\sum_{\bfy\in P_2\calB_2} e\left(\sum_{i=1}^R
\alp_i F_i(\bfx;\bfy)\right)\\
&= \sum_{\bfz^{(1)}}\sum_{\bfz^{(2)}} e\left(\sum_{i=1}^R a_i
F_i(\bfz^{(1)};\bfz^{(2)})/q\right) S_3 (\bfz^{(1)},\bfz^{(2)}),
\end{align*}
with the sum
\begin{equation*}
S_3 (\bfz^{(1)},\bfz^{(2)})= \sum_{\bfx'}\sum_{\bfy'} e\left(\sum_{i=1}^R\bet_i
F_i(q\bfx' + \bfz^{(1)};q \bfy' +\bfz^{(2)})\right),
\end{equation*}
where the integer vectors $\bfx'$ run through a range such that $q\bfx'
+\bfz^{(1)} \in P_1\calB_1$ and for $\bfy'$ analogously.\par
Consider some vectors $\bfx',\bfx''$ and $\bfy',\bfy''$ with
\begin{equation*}
\max_{1\leq i\leq n_1}|x_i' - x_i''| \leq 2,
\end{equation*}
and
\begin{equation*}
  \max_{1\leq i\leq n_2}|y_i'-y_i''|\leq 2.
\end{equation*}
In this case one has
\begin{align*}
|F_i(q\bfx' +\bfz^{(1)};q\bfy' +\bfz^{(2)})-F_i(q \bfx''
+\bfz^{(1)};q\bfy'' + \bfz^{(2)})|&\ll
qP_1^{d_1-1}P_2^{d_2}+qP_1^{d_1}P_2^{d_2-1} \\ &\ll qP_1^{d_1}P_2^{d_2-1}.
\end{align*}
We replace the sum in $S_3$ with an integral and
obtain
\begin{align*}
S_3&=  \int_{q\tilde{\bfv}\in
  P_1\calB_1}\int_{q\tilde{\bfw}\in P_2\calB_2} e\left(\sum_{i=1}^R\bet_i
F_i(q\tilde{\bfv}; q \tilde{\bfw})\right)\d\tilde{\bfv}\d\tilde{\bfw} \\ &+ O\left( \sum_{i=1}^R
|\bet_i| q P_1^{d_1} P_2^{d_2-1}
\left(\frac{P_1}{q}\right)^{\na}\left(\frac{P_2}{q}\right)^\nb +
\left(\frac{P_1}{q}\right)^\na \left(\frac{P_2}{q}\right)^{\nb-1}\right).
\end{align*}
A variable substitution $\bfv= qP_1^{-1}\bfvtil$ and $\bfw = q P_2^{-1}\bfwtil$ in the integral leads to
\begin{align*}
S_3&= P_1^\na P_2^\nb q^{-(\na +\nb) } \int_{\bfv \in \calB_1}\int_{\bfw \in \calB_2}
e\left(\sum_{i=1}^R P_1^{d_1}P_2^{d_2} \bet_i F_i(\bfv;\bfw)\right)\d\bfv\d\bfw\\
&+O(q^{-\na -\nb+1}P^{\eta} P_2^{-1}P_1^{\na}P_2^\nb+q^{-\na -\nb+1}P_1^\na P_2^{\nb-1})\\ 
&= P_1^\na P_2^\nb q^{-\na -\nb}  I(P\bfbet)
+O(P_1^\na P_2^\nb P^\eta P_2^{-1} q^{-\na -\nb+1}).
\end{align*}
Summing over $\bfz^{(1)}$ and $\bfz^{(2)}$ we finally obtain the approximation
\begin{equation*}
S(\bfalp)= P_1^\na P_2^\nb q^{-\na -\nb} S_{\bfa,q}
I( P\bfbet) +O(P_1^\na P_2^\nb P^{2\eta}P_2^{-1}),
\end{equation*}
as desired. 
\end{proof}

Now we use the approximation of Lemma \ref{lem7.3} to evaluate the sum over the major arcs from Lemma \ref{lem7.2}. This leads to
\begin{align*}
N(P_1,P_2)= &P_1^\na P_2^\nb \sum_{1\leq q\leq P^\eta}q^{-\na -\nb} \sum_{\bfa}
S_{\bfa,q} \int_{|\bfbet|\leq P^{-1+\eta}}I(P\bfbet) \d\bfbet \\ & +O
(P_1^\na P_2^\nb P^{2\eta}P_2^{-1} \meas (\grM'(\vartet_0))).
\end{align*}
The measure of these major arcs is bounded by
\begin{equation*}
\meas (\grM'(\vartet_0))\ll \sum_{q\leq P^\eta} q^R P^{-R+\eta R}\ll P^{-R+\eta (2R+1)}.
\end{equation*}
We define the sum
\begin{equation*}
\grS (P^\eta) = \sum_{1\leq q\leq P^\eta}q^{-\na -\nb}\sum_{\bfa} S_{\bfa,q},
\end{equation*}
where the second sum is as before over all tuples $0\leq a_i <q$ with $\gcd (q,a_1,\ldots, a_R)=1$, and we define the integral
\begin{equation*}
J(P^\eta)= \int_{|\bfbet|\leq P^\eta}I(\bfbet) \d\bfbet.
\end{equation*}
With this notation we obtain
\begin{align*}
N(P_1,P_2)&= P_1^\na P_2^\nb P^{-R} \grS(P^\eta) \int_{|\bfbet|\leq P^\eta}
I(\bfbet)\d\bfbet +O(P_1^\na P_2^\nb P^{-R}P_2^{-1}P^{\eta (2R+3)})\\ &=
P_1^\na P_2^\nb P^{-R} \grS(P^\eta) J(P^\eta) +O(P_1^\na P_2^\nb P^{-R+\eta (2R+3)-1/(bd_1+d_2)}).
\end{align*}
The error term is bounded by $O(P_1^nP_2^nP^{-R-\del})$ if we have
\begin{equation*}
\tfrac{1}{b d_1+d_2}> \eta (2R+3) +\del,
\end{equation*}
which is just equation (\ref{eqn7.5}). Thus, we have obtained the following asymptotic for $N(P_1,P_2)$.
\begin{lemma}\label{lem7.4} 
Assume that equation (\ref{eqn7.2}) holds and let $\del$ and $\vartet_0$ be chosen as at the beginning of this section. Then one has
\begin{equation*}
N(P_1,P_2)= P_1^\na P_2^\nb P^{-R} \grS(P^\eta) J(P^\eta) +O(P_1^\na P_2^\nb P^{-R-\del}).
\end{equation*}
\end{lemma}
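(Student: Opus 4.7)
The plan is to chain together Lemma \ref{lem7.2} and Lemma \ref{lem7.3}, and then bound the accumulated approximation error by the total measure of the major arcs.

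First I would start from the expression
\begin{equation*}
N(P_1,P_2) = \sum_{1\leq q\leq P^\eta}\sum_{\bfa} \int_{\grM'_{\bfa,q}(\vartet_0)} S(\bfalp)\,\d\bfalp + O(P_1^{n_1}P_2^{n_2}P^{-R-\del})
\end{equation*}
supplied by Lemma \ref{lem7.2}, where $\eta = R(\dtil+1)\vartet_0$. On each arc $\grM'_{\bfa,q}(\vartet_0)$ I would write $\bfalp = \bfa/q + \bfbet$ with $|\bet_i|\leq P^{-1+\eta}$, and apply Lemma \ref{lem7.3} to replace $S(\bfalp)$ by the main term $P_1^{\na}P_2^{\nb}q^{-\na-\nb}S_{\bfa,q}I(P\bfbet)$ plus an error $O(P_1^{\na}P_2^{\nb}P^{2\eta}P_2^{-1})$ that is uniform in $\bfalp$, $\bfa$ and $q$.

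Next I would perform the substitution $\bfbet \mapsto P^{-1}\bfbet$ in the integral of $I(P\bfbet)$ over $|\bet_i|\leq P^{-1+\eta}$, which introduces a factor $P^{-R}$ and produces the integral $J(P^\eta) = \int_{|\bfbet|\leq P^\eta} I(\bfbet)\,\d\bfbet$. Summing the main term over $q$ and $\bfa$ yields precisely $P_1^{\na}P_2^{\nb}P^{-R}\grS(P^\eta)J(P^\eta)$.

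The essential step is to control the total error. The measure of the major arcs satisfies
\begin{equation*}
\meas(\grM'(\vartet_0)) \ll \sum_{q\leq P^\eta} q^R P^{-R+\eta R} \ll P^{-R+\eta(2R+1)},
\end{equation*}
so integrating the pointwise error from Lemma \ref{lem7.3} across the major arcs contributes
\begin{equation*}
\ll P_1^{\na}P_2^{\nb}P^{2\eta}P_2^{-1}\cdot P^{-R+\eta(2R+1)} \ll P_1^{\na}P_2^{\nb}P^{-R+\eta(2R+3)-1/(bd_1+d_2)},
\end{equation*}
using the identification $P_2^{-1} = P^{-1/(bd_1+d_2)}$. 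The main obstacle, and precisely what condition (\ref{eqn7.5}) was engineered for, is to absorb this into $O(P_1^{\na}P_2^{\nb}P^{-R-\del})$: the exponent inequality $\eta(2R+3) + \del < 1/(bd_1+d_2)$ given by (\ref{eqn7.5}) guarantees exactly the required saving. Combining this error with the minor arc error from Lemma \ref{lem7.2} completes the proof.
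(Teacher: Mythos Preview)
Your proof is correct and follows essentially the same route as the paper: start from Lemma~\ref{lem7.2}, insert the approximation of Lemma~\ref{lem7.3} on each major arc, substitute $\bfbet\mapsto P^{-1}\bfbet$ to extract the factor $P^{-R}J(P^\eta)$, bound the accumulated error by the measure estimate $\meas(\grM'(\vartet_0))\ll P^{-R+\eta(2R+1)}$, and invoke condition~(\ref{eqn7.5}) to absorb the resulting exponent $\eta(2R+3)-1/(bd_1+d_2)$ into the $P^{-\del}$ saving.
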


Next we consider the terms $\grS(P^\eta)$ and $J(P^\eta)$ separately. First we define the singular series,
\begin{equation}\label{eqn7.6}
\grS = \sum_{q=1}^\infty \sum_{\bfa} q^{-(\na +\nb)} S_{\bfa,q},
\end{equation}
if this series exists. The following lemma shows that this is the case, and that $\grS$ is absolutely convergent.

\begin{lemma}\label{lem7.5}
The series $\grS$ is absolutely convergent and one has
\begin{equation*}
|\grS(Q)-\grS|\ll Q^{-\del/\eta},
\end{equation*}
for any large real number $Q$.
\end{lemma}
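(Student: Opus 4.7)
The plan is to derive a Weyl-type bound of the form $|S_{\bfa,q}| \ll q^{n_1+n_2 - R - 1 - 2\del/\eta + \eps}$ valid for every coprime pair $(q,\bfa)$, after which absolute convergence and the tail estimate follow from $\sum_\bfa q^{-(n_1+n_2)}|S_{\bfa,q}| \ll q^{-1-2\del/\eta+\eps}$ upon summing over $q$.

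To obtain this bound I would apply Lemma \ref{lem6.3} to the point $\bfalp = \bfa/q$ with the specialisation $P_1 = P_2 = q$ and the unit cubes $\calB_i = [0,1)^{n_i}$ in place of the original boxes (the machinery of Sections 2--5 is insensitive to the specific choice of boxes). In this setting $b=1$, $P=q^{d_1+d_2}$, and one has $S(\bfa/q) = S_{\bfa,q}$ exactly, because $q\calB_i$ is a full residue system modulo $q$. The crucial verification is that $\bfa/q \notin \grM(\tet)$ whenever $0 < \tet < ((d_1+d_2)R(\dtil+1))^{-1}$ and $q$ is sufficiently large. If $\bfa/q \in \grM(\tet)$, then there exist coprime $(q',\bfa')$ with $q' \le P^{R(\dtil+1)\tet}$ and $|a_i/q - a_i'/q'| \le (2q')^{-1}P^{-1+R(\dtil+1)\tet}$ for all $i$. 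Because $\bfa/q$ is in lowest terms, either $\bfa'/q' = \bfa/q$, which forces $q \le q^{(d_1+d_2)R(\dtil+1)\tet}$ and hence $\tet \ge ((d_1+d_2)R(\dtil+1))^{-1}$, or $\bfa'/q' \ne \bfa/q$, in which case $|a_i/q - a_i'/q'| \ge 1/(qq')$ for some $i$ forces $q \ge 2q^{(d_1+d_2)(1-R(\dtil+1)\tet)}$; in the given range of $\tet$ this exponent strictly exceeds $1$ (using $d_1+d_2 \ge 2$), so the inequality is impossible for large $q$.

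Consequently Lemma \ref{lem6.3} yields $|S_{\bfa,q}| \ll q^{n_1+n_2 - (d_1+d_2)K\tet + \eps}$. Pushing $\tet$ arbitrarily close to $((d_1+d_2)R(\dtil+1))^{-1}$ drives $(d_1+d_2)K\tet$ up to $K/(R(\dtil+1)) = K\vartet_0/\eta$, which by assumption (\ref{eqn7.3}) strictly exceeds $R + 1 + 2\del/\eta$. Hence $|S_{\bfa,q}| \ll q^{n_1+n_2 - R - 1 - 2\del/\eta + \eps}$. Summing over the at most $q^R$ admissible $\bfa$ gives $\sum_\bfa q^{-(n_1+n_2)}|S_{\bfa,q}| \ll q^{-1-2\del/\eta+\eps}$, which is summable in $q$ and yields the tail estimate $|\grS(Q) - \grS| \ll Q^{-2\del/\eta+\eps} \ll Q^{-\del/\eta}$ after choosing $\eps$ sufficiently small.

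The main obstacle is the verification that $\bfa/q$ escapes the modified major arcs uniformly in $q$; this succeeds only thanks to the dichotomy afforded by $\gcd(q,\bfa)=1$, which forces any putative approximating rational either to coincide with $\bfa/q$ (making $q$ too small) or to differ from it (making $q$ too large), with no intermediate regime when $\tet$ lies strictly below $((d_1+d_2)R(\dtil+1))^{-1}$.
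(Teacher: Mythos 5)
Your proof is correct and follows essentially the same route as the paper: specialise $P_1 = P_2 = q$ and $\calB_i = [0,1)^{n_i}$ so that $S_{\bfa,q} = S(\bfa/q)$, verify that $\bfa/q$ escapes $\grM(\tet)$ for $\tet$ just below $((d_1+d_2)R(\dtil+1))^{-1}$, invoke Lemma~\ref{lem6.3} to bound $|S_{\bfa,q}|$, and then sum using (\ref{eqn7.3}). The only cosmetic difference is in the verification that $\bfa/q \notin \grM(\tet)$: the paper collapses the dichotomy you spell out into a single observation, namely that $2|q'a_i - a_i'q|$ is forced below $1$ so that $q'a_i = a_i'q$, whence coprimality gives $q \mid q'$, contradicting $q' \le q^{1-\eps} < q$; your case split between $\bfa'/q' = \bfa/q$ and $\bfa'/q' \ne \bfa/q$ is an equivalent unpacking of the same fact. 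Your bookkeeping of the exponent ($2\del/\eta - \eps$ versus the paper's $\del/\eta$) is also sound and leads to the same tail bound.
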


\begin{proof}
First we need an estimate for the sums $S_{\bfa,q}$. For this we note that we have
\begin{equation*}
S_{\bfa,q}= S(\bfalp),
\end{equation*}
if we set $\calB_1 = [0,1)^\na$, $\calB_2=[0,1)^\nb$ and $P_1=P_2=q$ and $\bfalp = \bfa/q$. We define $\tet$ by
\begin{equation*}
(d_1+d_2) R(\dtil +1) \tet= 1-\eps,
\end{equation*}
for some $\eps >0$. Then we claim that $\bfa/q$ cannot lie inside the major arcs $\grM(\tet)$, if we assume $\gcd (q,a_1,\ldots, a_R)=1$. Otherwise we would have some integers $q'$ and $\bfa'$ with
\begin{equation*}
1\leq q' \leq q^{(d_1+d_2)R(\dtil +1) \tet},
\end{equation*}
and
\begin{equation*}
2 |q'a_i-a_i'q|\leq q q^{-d_1}q^{-d_2}q^{(d_1+d_2)R(\dtil+1) \tet},
\end{equation*}
for all $1\leq i\leq R$, which is impossible. Therefore Lemma \ref{lem6.3} delivers
\begin{align*}
|S_{\bfa,q}| &\ll q^{\na +\nb } q^{-K (d_1+d_2)[(d_1+d_2)R(\dtil +1)]^{-1}+\eps} \\
& \ll q^{\na +\nb  - K/(R (\dtil+1))+\eps}.
\end{align*}
With equation (\ref{eqn7.3}) this leads to the bound
\begin{equation*}
|S_{\bfa,q}| \ll q^{\na +\nb-R-1-\del/\eta}.
\end{equation*}
Now we can estimate the desired series
\begin{equation*}
\sum_{q >Q}\sum_{\bfa} q^{-\na -\nb}|S_{\bfa,q}| \ll \sum_{q>Q} q^{-1-\del/\eta}\ll Q^{-\del/\eta},
\end{equation*}
which proves both claims of the lemma.
\end{proof}

Similarly as for the singular series, we define the singular integral
\begin{equation}\label{eqn7.7}
J= \int_{\bfbet \in \R^R} I(\bfbet)\d\bfbet,
\end{equation}
if this exists.

\begin{lemma}\label{lem7.6}
The singular integral $J$ is absolutely convergent and we have
\begin{equation*}
|J- J(\Phi)| \ll \Phi^{-1},
\end{equation*}
for any large positive real number $\Phi$.
\end{lemma}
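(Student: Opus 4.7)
The plan is to reduce the lemma to a pointwise decay estimate of the form
\[
|I(\bfbet)|\ll (1+|\bfbet|)^{-R-1-\del''}
\]
for some $\del''>0$. Given this, both assertions follow at once: the trivial bound $|I(\bfbet)|\ll 1$ on $|\bfbet|\le 1$ together with the decay at infinity yields $J\in\R$, and a dyadic decomposition of the tail gives
\[
|J-J(\Phi)|\le \int_{|\bfbet|>\Phi}|I(\bfbet)|\d\bfbet \ll \int_\Phi^\infty r^{R-1}r^{-R-1-\del''}\d r \ll \Phi^{-1-\del''},
\]
which is stronger than the claimed $\Phi^{-1}$.

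To establish the pointwise decay I would transfer the problem from the integral to the exponential sum via Lemma \ref{lem7.3}. Fix $\bfbet$ with $|\bfbet|\ge 1$ and choose $\lam$ so that $|\bfbet|=\lam^{(d_1+d_2)\eta}$, where $\eta=R(\dtil+1)\vartet_0$. Applying Lemma \ref{lem7.3} with $P_1=P_2=\lam$, $q=1$, $\bfa=\mathbf{0}$, and $\bfalp:=\bfbet/\lam^{d_1+d_2}$ yields
\[
I(\bfbet)=\lam^{-\na-\nb}S(\bfalp)+O\bigl(\lam^{-1+2(d_1+d_2)\eta}\bigr).
\]
Next I pick $\tet=(1-\eps'')\vartet_0$ for some small $\eps''>0$ and check that $\bfalp$ evades every arc $\grM'_{\bfa',q'}(\tet)$. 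The trivial arc $(\bfa',q')=(\mathbf{0},1)$ is ruled out because $|\bfbet|=\lam^{(d_1+d_2)R(\dtil+1)\vartet_0}$ exceeds the threshold $\lam^{(d_1+d_2)R(\dtil+1)\tet}$, while a non-trivial approximation is excluded by an argument parallel to the one in Lemma \ref{lem7.5}: the defining inequality would force $\lam^{d_1+d_2}\ll \lam^{(d_1+d_2)(R(\dtil+1)\tet+\eta)}$, contradicting $R(\dtil+1)\tet+\eta<1$, which is in turn guaranteed by (\ref{eqn7.5}). Lemma \ref{lem6.3} therefore delivers $|S(\bfalp)|\ll\lam^{\na+\nb}\lam^{-(d_1+d_2)K\tet+\eps}$.

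Substituting this bound into the transference identity and unwinding the scaling $\lam=|\bfbet|^{1/((d_1+d_2)\eta)}$ produces
\[
|I(\bfbet)|\ll |\bfbet|^{-K\tet/\eta+\eps}+|\bfbet|^{2-1/((d_1+d_2)\eta)}.
\]
As $\eps''\to 0$ the first exponent approaches $K/(R(\dtil+1))$, which by (\ref{eqn7.3}) strictly exceeds $R+1+2\del/(\vartet_0 R(\dtil+1))$; the second exponent is at most $-(2R+1)$ thanks to the bound $\eta<1/((2R+3)(bd_1+d_2))$ extracted from (\ref{eqn7.5}). Both therefore give the required pointwise decay with some $\del''>0$. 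The main obstacle is the verification that $\bfalp$ falls outside every major arc of Lemma \ref{lem6.3}, and in particular outside the trivial arc at the origin, to which $\bfalp$ is naturally close; this is precisely what forces the auxiliary parameter $\tet$ to be taken strictly smaller than $\vartet_0$ while remaining admissible in Lemma \ref{lem6.3}.
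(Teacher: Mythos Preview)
Your argument is correct and follows essentially the same route as the paper: transfer from $I(\bfbet)$ to $S(\bfalp)$ via Lemma~\ref{lem7.3}, then bound $S(\bfalp)$ through Lemma~\ref{lem6.3} after placing $\bfalp$ outside the relevant major arcs, to obtain $|I(\bfbet)|\ll(\max_i|\bet_i|)^{-R-1}$. The only tactical differences are that the paper keeps the ratio $b=\log P_1/\log P_2$ fixed (rather than specialising to $P_1=P_2$) and takes $\tet=\vartet_0$ directly, arguing that $P^{-1}\bfbet$ sits on the boundary of the disjoint arcs $\grM(\vartet_0)$; your device of shrinking to $\tet=(1-\eps'')\vartet_0$ achieves the same exclusion more explicitly and avoids the boundary argument.
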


\begin{proof}
For convenience of notation we set $B= \max_i |\bet_i|$ for some real vector $\bfbet = (\bet_1,\ldots,\bet_R)$, and assume $B\geq 2$. Set $\tet=\vartet_0$ as we have chosen it at the beginning of this section and define $P$ by 
\begin{equation*}
2B= P^{R(\dtil +1)\tet}.
\end{equation*}
Then we have $P^{-1}\bfbet \in \grM_{0,1}(\tet)$, since
\begin{equation*}
2|P^{-1}\bet_i|\leq P^{-1}P^{R(\dtil +1)\tet},
\end{equation*}
for all $1\leq i\leq R$. Then Lemma \ref{lem7.3} delivers 
\begin{equation}\label{eqnlem7.6}
S(P^{-1}\bfbet)= P_1^\na P_2^\nb I(\bfbet) +O(P_1^\na P_2^\nb P^{2R(\dtil +1)\tet} P_2^{-1}).
\end{equation}
Furthermore $P^{-1}\bfbet$ lies by construction on the boundary of $\grM (\tet)$, which are disjoint by Lemma 4.1 of Birch's paper \cite{Bir1961}. Thus, our Lemma \ref{lem6.3} gives the bound
\begin{equation*}
|S(P^{-1}\bfbet)| \ll P_1^\na P_2^\nb P^{-K\tet +\eps}.
\end{equation*}
Together with equation (\ref{eqnlem7.6}) this implies
\begin{equation*}
|I(\bfbet)| \ll P^{-K\vartet_0+\eps}+P^{2R(\dtil+1)\tet - 1/(bd_1+d_2)}.
\end{equation*}
From equation (\ref{eqn7.5}) we see that
\begin{equation*}
\tfrac{1}{bd_1+d_2} - 2R (\dtil +1)\vartet_0 > 2R (R+1)(\dtil +1)\vartet_0 +\del,
\end{equation*}
which implies
\begin{equation*}
P^{2R(\dtil+1)\tet - 1/(bd_1+d_2)}\ll B^{-2R}.
\end{equation*}
In the same way we see that equation (\ref{eqn7.3}) gives 
\begin{equation*}
P^{-K\vartet_0+\eps}\ll B^{-R-1},
\end{equation*}
such that we have
\begin{equation*}
|I(\bfbet)| \ll (\max_i|\bet_i|)^{-R-1}.
\end{equation*}
Now we can use this bound to estimate the integral
\begin{equation*}
\int_{\Phi_1\leq B\leq \Phi_2} |I(\bfbet)|\d\bfbet \ll \int_{\Phi_1\leq B\leq \Phi_2} B^{R-1}B^{-R-1}\d B \ll \Phi_1^{-1}.
\end{equation*}
This shows that $J$ is absolutely convergent and also that the second assertion of the lemma holds.
\end{proof}

\section{Conclusions}
Before we finish our proof of Theorem \ref{thm2.1}, we give an alternative representation of the singular integral, following Schmidt's work \cite{Schmidt1981}. For this we define the function
\begin{align*}
\psi(z)=\left\{ \begin{array}{cc} 1-|z| & \mbox{ for } |z|\leq 1,\\ 0 & \mbox{ for } |z| >1,\end{array}\right.
\end{align*}
and for $T>0$ we set $\psi_T(z)=T\psi (T z)$. Furthermore, for some vector $\bfz = (z_1,\ldots,z_R)$ we define 
\begin{equation*}
\psi_T(\bfz)= \psi_T(z_1)\cdot\ldots\cdot \psi_T(z_R).
\end{equation*}
With this notation we define
\begin{equation*}
\tilde{J}_T = \int_{\calB_1\times \calB_2} \psi_T(\bfF(\bfxi^{(1)};\bfxi^{(2)}))\d\bfxi^{(1)}\d\bfxi^{(2)},
\end{equation*}
and
\begin{equation*}
\tilde{J}=\lim_{T\rightarrow \infty}\tilde{J}_T,
\end{equation*}
if the limit exists.

\begin{proof}[Proof of Theorem \ref{thm2.1}]
Note that the assumptions of Theorem \ref{thm2.1} imply that equation (\ref{eqn7.2}) holds. Hence, by Lemma \ref{lem7.4} we have
\begin{equation*}
N(P_1,P_2)= P_1^\na P_2^\nb P^{-R} \grS(P^\eta) J(P^\eta) +O(P_1^\na P_2^\nb P^{-R-\del}).
\end{equation*}
Together with Lemma \ref{lem7.5} and Lemma \ref{lem7.6} this gives
\begin{equation*}
N(P_1,P_2)= P_1^\na P_2^\nb P^{-R} \grS J +O(P_1^\na P_2^\nb P^{-R-\del}),
\end{equation*}
which already proves the first part of the theorem.\par
As usual, the singular series $\grS$ factorizes as $\grS = \prod_p \grS_p$, where the product is over all primes $p$, and 
\begin{equation*}
\grS_p = \sum_{l=1}^\infty \sum_{\bfa} p^{-(\na +\nb)l}S_{\bfa,p^l},
\end{equation*}
where the sum over $\bfa$ is over all $0\leq a_i < p^l$ with $\gcd (a_1,\ldots, a_R,p)=1$. We know in a relatively general context that $\grS >0$ if the $F_i(\bfx;\bfy)$ have a common non-singular $p$-adic zero for all $p$. This can for example be found in Birch's work \cite{Bir1961}, and applies to our case, since $\grS$ is absolutely convergent by Lemma \ref{lem7.5}.\par
Our singular integral can be treated in the very same way as in Schmidt's work
\cite{Schmidt1981}. First of all we know that $\tilde{J} > 0$, if $\dim V(0)=
\na +\nb-R$ and if the $F_i(\bfx;\bfy)$ have a non-singular real zero in $\calB_1\times \calB_2$. This is just Lemma 2 from Schmidt's paper \cite{Schmidt1981}. Furthermore, we have shown in the proof of Lemma \ref{lem7.6} that we have
\begin{equation*}
|I(\bfbet)|\ll \min (1,\max_i|\bet_i|^{-R-1}),
\end{equation*}
which enables us to apply section 11 of \cite{Schmidt1981}. This implies that the limit 
\begin{equation*}
\tilde{J}=\lim_{T\rightarrow\infty} \tilde{J}_T
\end{equation*}
exists and equals $\tilde{J}=J$. This proves our main theorem.
\end{proof}

\bibliographystyle{amsbracket}
\providecommand{\bysame}{\leavevmode\hbox to3em{\hrulefill}\thinspace}

\end{document}